\documentclass[11pt,reqno]{amsart}
\usepackage{amsmath, amssymb, amsthm}

\setlength{\textheight}{220mm} \setlength{\textwidth}{155mm}
\setlength{\oddsidemargin}{1.25mm}
\setlength{\evensidemargin}{1.25mm} \setlength{\topmargin}{0mm}

\parskip .04in

\renewcommand{\(}{\left\(}
\renewcommand{\)}{\right\)}
\renewcommand{\[}{\left\[}
\renewcommand{\]}{\right\]}

\numberwithin{equation}{section}
 \theoremstyle{plain}
\newtheorem{theorem}{Theorem}[section]
\newtheorem{lemma}[theorem]{Lemma}

\newtheorem{definition}[]{Definition}

\newtheorem{example}[]{Example}

   \makeatletter
\def\proof{\@ifnextchar[{\@oproof}{\@nproof}}
\def\@oproof[#1][#2]{\trivlist\item[\hskip\labelsep\textit{#2 \
#1.}~]\ignorespaces}
\def\@nproof{\trivlist\item[\hskip\labelsep\textit{Proof.}~]\ignorespaces}

\makeatother
\begin{document}
\title[On the combinatorics of $r$-chain minimal and maximal excludants]{On the combinatorics of $r$-chain minimal and maximal excludants}
\author{Subhash Chand Bhoria}
\address{Subhash Chand Bhoria, Department of Mathematics, Pt. Chiranji Lal Sharma Government College, Urban Estate, Sector-14, Karnal, Haryana - 132001, India.}
\email{scbhoria89@gmail.com}


\author{Pramod Eyyunni}
\address{Pramod Eyyunni, Department of Mathematics,
Birla Institute of Technology and Science Pilani, Rajasthan - 333031, India.} 
\email{pramod.eyyunni@pilani.bits-pilani.ac.in}


\author{Runqiao Li}
\address{Runqiao Li, Department of Mathematics, The Pennsylvania State University, Pennsylvania - 16801, U.S.A} 
\email{rml5856@psu.edu, runqiaoli@outlook.com}

\thanks{$2020$ \textit{Mathematics Subject Classification.} Primary 05A17, 11P84, 05A19; Secondary 11P81. \\
\textit{Keywords and phrases.} Minimal excludant, maximal excludant, $r$-chain minimal excludants, $r$-chain maximal excludants, partition identities, bijective proofs.}

\begin{abstract}
The minimal excludant (mex) of a partition was introduced by Grabner and Knopfmacher under the name `least gap' and was revived by a couple of papers due to Andrews and Newman. It has been widely studied in recent years together with the complementary partition statistic maximal excludant (maex), first introduced by Chern. Among such recent works, the first and second authors along with Maji introduced and studied the $r$-chain minimal excludants ($r$-chain mex) which led to a new generalization of Euler's classical partition theorem and the sum-of-mex identity of Andrews and Newman. In this paper, we first give combinatorial proofs for these two results on $r$-chain mex. Then we introduce the $r$-chain maximal excludants ($r$-chain maex) and establish the associated identity for the sum of $r$-chain maex over all partitions, both analytically and combinatorially.  
\end{abstract}  
\maketitle

\section{Introduction}
A \emph{partition} $\lambda$ of a positive integer $n$ is a weakly decreasing sequence of positive integers $\lambda=(\lambda_1,\lambda_2,\ldots,\lambda_{\ell})$ such that $n=\lambda_1+\lambda_2+\cdots+\lambda_{\ell}$. The terms $\lambda_i$ are called the \emph{parts} of $\lambda$, and the number of parts of $\lambda$ is called the \emph{length} of $\lambda$, denoted by $\#(\lambda)$. We let $\mathcal{P}(n)$ represent the collection of partitions of a non-negative integer $n$ and put $p(n) := |\mathcal{P}(n)|$.

In the mid eighteenth century, Euler proved the following celebrated result on partitions.

\begin{theorem}\label{Euler}
The number of partitions of a positive integer $n$ into odd parts equals the number of those into distinct parts.
\end{theorem}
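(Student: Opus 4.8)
The plan is to prove the identity first at the level of generating functions, and then to describe the classical bijection of Glaisher, which fits the combinatorial spirit of the present paper. Write $p_O(n)$ for the number of partitions of $n$ into odd parts and $p_D(n)$ for the number into distinct parts. The analytic proof is one line: by Euler's product formulas,
\begin{equation*}
\sum_{n\geq 0} p_D(n)\,q^n \;=\; \prod_{k\geq 1}(1+q^k) \;=\; \prod_{k\geq 1}\frac{1-q^{2k}}{1-q^k} \;=\; \prod_{k\geq 1}\frac{1}{1-q^{2k-1}} \;=\; \sum_{n\geq 0} p_O(n)\,q^n,
\end{equation*}
where the third equality follows by splitting the denominator $\prod_{k\geq 1}(1-q^k)$ into its even-indexed factors $\prod_{k\geq 1}(1-q^{2k})$ and its odd-indexed factors $\prod_{k\geq 1}(1-q^{2k-1})$, and cancelling the former against the numerator.

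For the combinatorial argument I would exhibit an explicit weight-preserving bijection $\Phi$ from partitions of $n$ into odd parts onto partitions of $n$ into distinct parts. Given $\lambda$ with only odd parts, let $m_j\geq 0$ be the multiplicity of $2j-1$ in $\lambda$, and write its binary expansion $m_j=\sum_i 2^{a_{j,i}}$ with the exponents $a_{j,i}$ pairwise distinct. Replace the $m_j$ copies of $2j-1$ by the parts $2^{a_{j,i}}(2j-1)$, and let $\Phi(\lambda)$ be the resulting multiset of parts. The candidate inverse $\Psi$ takes a partition into distinct parts, factors each part uniquely as $2^{a}\cdot u$ with $u$ odd, and for each odd $u=2j-1$ reads off $m_j$ as the sum of the powers $2^{a}$ occurring with that odd component.

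The only point requiring care — hence the main obstacle, modest as it is — is checking that $\Phi(\lambda)$ really has pairwise distinct parts and that $\Phi$ and $\Psi$ are mutually inverse. Both reduce to the uniqueness of binary representation: two output parts $2^{a}(2j-1)$ and $2^{b}(2j'-1)$ coincide only if $j=j'$ (matching odd components) and then $a=b$ (matching $2$-adic valuations), which is impossible since the $a_{j,i}$ are distinct; conversely, for a partition into distinct parts the exponents $a$ attached to a fixed odd value are automatically distinct, so their sum is a legitimate binary expansion and $\Psi$ inverts $\Phi$. Weight preservation is immediate in both directions, since $\sum_i 2^{a_{j,i}}(2j-1)=m_j(2j-1)$. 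This would complete the proof.
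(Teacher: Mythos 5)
Your proof is correct: the generating-function identity is Euler's classical one-line argument, and the bijection you describe is precisely Glaisher's map in its binary-expansion form, which coincides (for $r=2$) with the iterated merging of equal pairs that the paper itself recalls as the map $f$ in Section 2. The paper states this theorem without proof as classical background, so your argument supplies exactly the standard proof consistent with the machinery the paper uses.
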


This set in motion the rich area of partition identities. The reader may consult \cite{alder, pak} for more on the history of partition identities. Euler's theorem has been generalized by Glaisher \cite{glaisher} and Franklin \cite{franklin} in the following manner.

\begin{theorem}[Glaisher]\label{Glaisher} 
The number of partitions of $n$ into parts not divisible by a positive integer $r$ equals the number of partitions of $n$ with each integer occurring less than $r$ times.
\end{theorem}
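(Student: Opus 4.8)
The plan is to establish Theorem~\ref{Glaisher} in two complementary ways: a short generating-function computation, and a bijection generalizing the classical correspondence that underlies Theorem~\ref{Euler}.

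For the analytic proof, I would write down the two generating functions. Partitions of $n$ into parts not divisible by $r$ are enumerated by
$$\prod_{\substack{k\ge 1\\ r\nmid k}}\frac{1}{1-q^k},$$
while partitions of $n$ in which every integer occurs fewer than $r$ times are enumerated by
$$\prod_{k\ge 1}\bigl(1+q^k+q^{2k}+\cdots+q^{(r-1)k}\bigr)=\prod_{k\ge 1}\frac{1-q^{rk}}{1-q^k}.$$
Splitting the denominator $\prod_{k\ge1}(1-q^k)$ according to whether $r\mid k$ and cancelling $\prod_{k\ge1}(1-q^{rk})$ against the factors with $r\mid k$ immediately yields $\prod_{r\nmid k}(1-q^k)^{-1}$, which proves the identity of generating functions and hence the theorem.

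For the bijective proof, I would describe Glaisher's map $\phi$ from the first family to the second. Given a partition $\lambda$ counted on the left, take a part $m$ with $r\nmid m$ occurring with multiplicity $f_m$, write $f_m=\sum_{i\ge 0}a_i r^i$ in base $r$ (so $0\le a_i\le r-1$), and replace the $f_m$ copies of $m$ by $a_i$ copies of $m r^i$ for each $i\ge 0$; performing this for every part produces $\phi(\lambda)$. Since each positive integer $N$ factors uniquely as $N=m r^i$ with $r\nmid m$, the part $N$ of $\phi(\lambda)$ can only have come from the source part $m$, and its multiplicity there equals the digit $a_i<r$; hence $\phi(\lambda)$ lies in the second family and visibly has the same size as $\lambda$. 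The inverse map collects, for each $m$ with $r\nmid m$, the multiplicities $a_i$ of the parts $m r^i$ occurring in a partition $\mu$ from the second family and returns $\sum_i a_i r^i$ copies of $m$; uniqueness of the base-$r$ expansion shows this is a two-sided inverse of $\phi$.

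The routine parts here are the generating-function manipulation and the size-preservation of $\phi$. The step requiring care is the well-definedness of $\phi$: one must check that the outputs $m r^i$ arising from distinct source parts never collide and that each appears with multiplicity strictly less than $r$ — which is exactly unique factorization $N=m r^i$ ($r\nmid m$) together with the digit bound $a_i\le r-1$ — and, dually, that the inverse is well-defined, i.e. that the exponents recovered from a partition in the second family really are legitimate base-$r$ digits. Once these observations are in place (the case $r=2$ recovering Euler's Theorem~\ref{Euler}), the bijection is complete.
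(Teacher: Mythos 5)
Your proof is correct; the paper does not prove Theorem~\ref{Glaisher} itself but recalls Glaisher's bijection $f$ (iteratively merging $r$ equal copies, resp.\ splitting multiples of $r$) in the proof of Theorem~\ref{bem1}, and your base-$r$ expansion of the multiplicities is exactly the closed form of that same merging map, with unique factorization $N=mr^i$, $r\nmid m$, playing the same role. The added generating-function computation is a standard, correct alternative but does not change the essential approach.
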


\begin{theorem}[Franklin]\label{Franklin} 
The number of partitions of $n$ in which exactly $j$ different parts (these parts can be repeated) are divisible by $r$ equals the number of partitions of $n$ in which exactly $j$ different parts occur at least $r$ times each and rest of the parts appear at most $r-1$ times.
\end{theorem}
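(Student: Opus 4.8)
\smallskip
\noindent\emph{Sketch of a proof of Theorem~\ref{Franklin}.} The plan is to prove it bijectively, by peeling off on each of the two sides exactly those parts that ``carry'' the statistic $j$, and then invoking Glaisher's theorem (Theorem~\ref{Glaisher}) on what remains. Fix an integer $r\ge 2$; the case $r=2$, $j=0$ will already specialize back to Euler (Theorem~\ref{Euler}).

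First I would record the two decompositions. On the left-hand side, take a partition $\lambda$ of $n$ in which exactly $j$ distinct part-sizes are divisible by $r$ and split it uniquely as $\lambda=\lambda'\sqcup\lambda''$, where $\lambda'$ gathers the parts divisible by $r$ and $\lambda''$ the parts not divisible by $r$. Dividing every part of $\lambda'$ by $r$ produces a partition $\rho$ with exactly $j$ distinct parts (of arbitrary sizes), and $|\lambda|=r|\rho|+|\lambda''|$; conversely every pair $(\rho,\lambda'')$ in which $\rho$ has exactly $j$ distinct parts and $\lambda''$ is a partition into parts not divisible by $r$ reassembles to a unique such $\lambda$. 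On the right-hand side, take a partition $\mu$ of $n$ in which exactly $j$ distinct part-sizes occur at least $r$ times (and every other part at most $r-1$ times), and for each part-size $m$ write its multiplicity as $m_{\mu}(m)=s_m+r\,t_m$ with $0\le s_m\le r-1$ and $t_m\ge 0$; since $s_m<r$, the size $m$ occurs at least $r$ times precisely when $t_m\ge 1$. Letting $\mu_0$ be the partition in which $m$ has multiplicity $s_m$ and $\nu$ the partition in which $m$ has multiplicity $t_m$, this yields a bijection between the $\mu$'s and the pairs $(\nu,\mu_0)$ in which $\nu$ has exactly $j$ distinct parts, $\mu_0$ has every part occurring at most $r-1$ times, and $|\mu|=r|\nu|+|\mu_0|$.

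It then remains to match the two families of pairs. I would keep the $j$-carrying component fixed by setting $\rho=\nu$ (both are just partitions with exactly $j$ distinct parts, of the same size) and invoke Theorem~\ref{Glaisher} on the remaining component, in the weight-preserving form given by Glaisher's bijection: the map sending each part $r^{a}u$ with $r\nmid u$ to multiplicity $r^{a}$ of the part $u$, inverted by reading off the base-$r$ digits of the multiplicity of each part. Composing the three bijections sends a left-hand-side partition of $n$ with statistic $j$ to a right-hand-side partition of $n$ with statistic $j$, which is the assertion; specializing to $j=0$ collapses $\rho=\nu=\varnothing$ and recovers Theorem~\ref{Glaisher} itself.

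The step I expect to cost the most care is verifying that the multiplicity-splitting on the right-hand side is genuinely a bijection onto the stated family, and in particular that it transports ``exactly $j$ part-sizes occur at least $r$ times'' to ``$\nu$ has exactly $j$ distinct parts'' while preserving total weight, i.e. $\sum_m m(s_m+r\,t_m)=n$; both facts follow from uniqueness of division with remainder together with the equivalence $m_\mu(m)\ge r\iff t_m\ge 1$. As an independent check one can run the generating-function computation in parallel: after using $\prod_{r\nmid m}(1-q^m)^{-1}\prod_{r\mid m}(1-q^m)^{-1}=\prod_{m\ge 1}(1-q^m)^{-1}$, both sides are seen to equal $[z^j]\bigl(\prod_{k\ge 1}(1-(1-z)q^{rk})\big/\prod_{m\ge1}(1-q^m)\bigr)$, which makes the identity transparent.
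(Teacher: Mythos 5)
Your argument is correct, but note that the paper does not prove Theorem \ref{Franklin} at all --- it is quoted as a classical result with only a citation to Franklin --- so there is no in-paper proof to compare against directly. The natural comparison is with the paper's combinatorial proof of Theorem \ref{bem1}, which follows the same template you use: split a partition as $\pi_o*\pi_r$ with $\pi_r$ collecting the parts divisible by $r$, send $\pi_o$ through Glaisher's bijection $f$ of \eqref{f map}, and separately convert $\pi_r$ into the piece carrying the $r$-repeating parts. The one substantive difference lies in how that $j$-carrying piece is converted. The paper conjugates $\pi_r$, turning the number of parts divisible by $r$ (counted with multiplicity) into the largest $r$-repeating part, which is the statistic Theorem \ref{bem1} tracks. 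You instead divide the parts of $\lambda'$ by $r$ to get $\rho$ and match it with the partition $\nu$ of quotient multiplicities $t_m$ read off from $\mu$ via $m_\mu(m)=s_m+rt_m$; this turns the number of \emph{distinct} part-sizes divisible by $r$ into the number of distinct part-sizes of multiplicity at least $r$, which is Franklin's statistic. Both conversions are weight-preserving and both reduce the theorem to Glaisher's bijection on the remaining component, so your proof is sound; the points you flag as needing care (uniqueness of division with remainder, and $m_\mu(m)\ge r$ if and only if $t_m\ge1$) are indeed the only ones to check, and the closing generating-function identity, with both sides equal to $[z^j]\prod_{m\ge1}\frac{1-(1-z)q^{rm}}{1-q^m}$, is also correct. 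As a small remark, conjugating $\lambda'$ would work just as well as dividing by $r$, since conjugation preserves the number of distinct parts and exchanges ``all parts divisible by $r$'' with ``all multiplicities divisible by $r$''; that choice would make your bijection verbatim parallel to the paper's map $\phi_r$.
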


Throughout this paper, we shall call partitions with no multiple of $r$ appearing as a part as $r$ - regular partitions and those partitions where each integer appears at most $r-1$ times as $r$ - strict partitions. In the rest of the article, we adopt the following $q$-series notation: For $a, q \in \mathbb{C}$, we define
\begin{align*}
(a;q)_0 &:=1, \ (a;q)_n :=\prod_{i=0}^{n-1}(1-aq^i), \ n\in \mathbb{N}, \\
(a;q)_{\infty} &:= \lim_{n \to \infty} (a; q)_n = \prod_{i=0}^{\infty}(1-aq^i) \quad \text{for} \ |q| < 1, \\
{n\brack m}&=\frac{(q;q)_n}{(q;q)_m(q;q)_{n-m}}, \ n\geq m\geq 0.
\end{align*}
We also present below some frequently used notations throughout the sequel:
\begin{itemize}
\item $\pi$ := an integer partition,
\item $|\pi|$ := sum of the parts of $\pi$,
\item $\pi^{'}$ := the conjugate partition of $\pi$,
\item $s(\pi)$ := the smallest part of $\pi$,
\item $\ell(\pi)$ := the largest part of $\pi$
\item $\nu_{\pi}(t)$ := the multiplicity of an integer $t$ in $\pi$, the subscript being omitted if the partition in question is clear.
\item $\mathcal{P}_{r}^0(n)$ := set of partitions of $n$ where the gap between any two successive parts is at most $r$ and the smallest part is at most $r$, \label{P_0}
\item $\mathcal{P}_{r}^{+}(n)$ := $\mathcal{P}(n) \setminus \mathcal{P}_r^0(n)$, \label{P_+}
\end{itemize}

As we shall see shortly, both $\mathcal{P}_{r}^0(n)$ and $\mathcal{P}_{r}^{+}(n)$ are closely intertwined with our generalization of the maximal excludant and subsequent results. They also play a crucial role in the bijective proofs pertaining to both the minimal and maximal excludants. Before describing these results, we give a brief introduction to these concepts.  In 2019, Andrews and Newman \cite{AN19, ANII} undertook a combinatorial study of the minimal excludant (`mex') of partitions, earlier defined by Grabner and Knopfmacher \cite{grabner}. The minimal excludant of a partition $\lambda$, denoted by $mex(\lambda)$, is the smallest positive integer that does not appear in $\lambda$. They established the following result.

\begin{theorem}\label{mex1} The generating function for the sum of minimum excludants over all partitions of $n$, denoted by $\sigma\textup{mex}(n)$, is given by
$$\sum_{n=0}^{\infty} \sigma\textup{mex}(n) q^n = \sum_{n=0}^{\infty}\sum_{\lambda \in \mathcal{P}(n)} mex(\lambda)q^{n}=(-q;q)_{\infty}^{2}.$$
\end{theorem}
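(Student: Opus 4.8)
The plan is to compute the generating function $\sum_n \sigma\textup{mex}(n) q^n$ by switching the order of summation, so that instead of summing $mex(\lambda)$ over partitions we sum over the ``witnesses'' for the mex. The key observation is the identity
\[
mex(\lambda) = \sum_{j \ge 1} [\, j \le mex(\lambda)\,] = \sum_{j \ge 1} [\, 1, 2, \dots, j-1 \text{ all appear in } \lambda \,],
\]
where $[\cdot]$ is the Iverson bracket. Summing this over all $\lambda \in \mathcal{P}(n)$ and then over $n$, and exchanging the order of summation, one gets
\[
\sum_{n=0}^{\infty} \sigma\textup{mex}(n) q^n = \sum_{j=1}^{\infty} \sum_{n=0}^{\infty} p_j(n) q^n,
\]
where $p_j(n)$ counts partitions of $n$ in which each of $1, 2, \dots, j-1$ occurs at least once (with no restriction on larger parts or on the multiplicities). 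So the main task reduces to identifying each inner sum.

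For fixed $j \ge 1$, a partition counted by $p_j(n)$ is obtained by first laying down the ``mandatory'' parts $1 + 2 + \cdots + (j-1) = \binom{j}{2}$ and then adjoining an arbitrary partition of the remaining weight. Hence
\[
\sum_{n=0}^{\infty} p_j(n) q^n = \frac{q^{\binom{j}{2}}}{(q;q)_{\infty}},
\]
and therefore
\[
\sum_{n=0}^{\infty} \sigma\textup{mex}(n) q^n = \frac{1}{(q;q)_{\infty}} \sum_{j=1}^{\infty} q^{\binom{j}{2}} = \frac{1}{(q;q)_{\infty}} \sum_{k=0}^{\infty} q^{\binom{k+1}{2}}.
\]
The remaining step is purely a $q$-series manipulation: one invokes a standard theta-type evaluation, namely $\sum_{k \ge 0} q^{k(k+1)/2} = (-q;q)_{\infty}(q;q)_{\infty}$ (a specialization of the Jacobi triple product, or equivalently Gauss's identity), so that the factor $(q;q)_{\infty}$ cancels and we are left with $(-q;q)_{\infty}^{2}$, as claimed. (Alternatively one can route through Euler's Theorem~\ref{Euler} in the form $(-q;q)_{\infty} = 1/(q;q^2)_{\infty}$ together with a Durfee-square or Franklin-type dissection, but the theta evaluation is the cleanest.)

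The step I expect to be the genuine crux is the last one: recognizing that $\sum_{j \ge 1} q^{\binom{j}{2}}$ is exactly the theta series that cancels one copy of $1/(q;q)_\infty$ and promotes the other to $(-q;q)_\infty^2$. The reindexing and the ``prescribe the small parts, then append an arbitrary partition'' bijection are routine; everything hinges on having the right classical identity for the partial-theta/theta sum $\sum_{j} q^{j(j-1)/2}$ on hand, and on checking convergence/interchange of the double sum (which is harmless here since all coefficients are nonnegative and, for each fixed $n$, only finitely many $j$ contribute because $\binom{j}{2} > n$ eventually). A combinatorial rendering of the same argument would instead exhibit a weight-preserving bijection between pairs (partition, integer $j \le mex$) on one side and pairs of partitions into distinct parts on the other, but for this theorem the analytic route above is the most economical.
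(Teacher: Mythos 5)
The paper itself does not prove this theorem: it is quoted as a known result of Andrews and Newman, with a bijective proof credited to Ballantine and Merca, so there is no internal proof to compare against. Your overall strategy --- writing $mex(\lambda)=\sum_{j\ge1}[\,j\le mex(\lambda)\,]$, interchanging the sums, and peeling off the mandatory staircase $1+2+\cdots+(j-1)$ to arrive at $\frac{1}{(q;q)_{\infty}}\sum_{j\ge1}q^{\binom{j}{2}}$ --- is sound, and is essentially the original Andrews--Newman argument; the interchange and the staircase bijection are both fine.

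The flaw sits exactly at the step you yourself identify as the crux: the identity you quote, $\sum_{k\ge0}q^{k(k+1)/2}=(-q;q)_{\infty}(q;q)_{\infty}$, is false. That right-hand side equals $\prod_{n\ge1}(1+q^{n})(1-q^{n})=(q^{2};q^{2})_{\infty}$, whose coefficient of $q$ is $0$, whereas the theta sum has coefficient $1$ there. Moreover, even taken at face value your identity would cancel down to $(-q;q)_{\infty}$, not $(-q;q)_{\infty}^{2}$, so the write-up is internally inconsistent at this point. The correct evaluation (Gauss) is
\begin{equation*}
\sum_{k\ge0}q^{k(k+1)/2}=\frac{(q^{2};q^{2})_{\infty}}{(q;q^{2})_{\infty}}=(-q;q)_{\infty}(q^{2};q^{2})_{\infty}=(-q;q)_{\infty}^{2}(q;q)_{\infty},
\end{equation*}
where the last two forms follow from Euler's $(-q;q)_{\infty}=1/(q;q^{2})_{\infty}$ and $(q^{2};q^{2})_{\infty}=(-q;q)_{\infty}(q;q)_{\infty}$. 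Substituting this, the factor $(q;q)_{\infty}$ cancels and you do obtain $(-q;q)_{\infty}^{2}$; with that one correction the rest of your argument stands.
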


This theorem implies that for any positive integer $n$, the sum of the mex taken over all partitions of $n$ equals the number of partitions of $n$ into distinct parts using two colors. Later, a bijective proof of this fact was given by Ballantine and Merca \cite{BM21}.

Recently, the first and second authors along with Maji \cite{BEM22} generalized the minimal excludant and studied the $r$-chain minimal excludant of partitions. Given any partition $\lambda$, the $r$-chain minimal excludant of $\lambda$, denoted by $mex(\lambda; r)$, is the smallest positive integer $k$ such that none of $k,k+1,\ldots,k+r-1$ appears as a part of $\lambda$. They proved the following two results concerning the $r$-chain minimal excludant (`$r$-chain mex' for short). The first one below \cite[Theorem $2.4$]{BEM22} is an analogue of Franklin's result (Theorem \ref{Franklin}) concerning the total number of multiples of $r$. Here, an $r$-repeating part in a partition is an integer which has multiplicity \emph{at least} $r$.

\begin{theorem}\label{bem1}
Suppose that $j$ is a non-negative integer and $r\geq2$ is a positive integer. Then
the number of partitions of $n$ with exactly $j$ multiples of $r$ equals the number of partitions of
$n$ whose largest $r$-repeating part is $j$, which is also equal to the number of partitions of $n$ with $j$ parts greater than the $(r-1)$-chain mex. 
\end{theorem}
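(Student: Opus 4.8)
The plan is to prove the chain $f_j(n)=g_j(n)=h_j(n)$ — writing $f_j(n),g_j(n),h_j(n)$ for the three counts in the order they are stated — by exhibiting two bijections: ordinary conjugation of partitions for $g_j(n)=h_j(n)$, and a Glaisher-type map (building on Theorem \ref{Glaisher}) for $f_j(n)=g_j(n)$. The case $j=0$ of the latter is already Glaisher's theorem (the two sides then count $r$-regular, respectively $r$-strict, partitions), so the real work lies in the refinement that simultaneously keeps track of the number of multiples of $r$ and of the largest $r$-repeating part.

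For $g_j(n)=h_j(n)$, I would show that $\pi\mapsto\pi'$ carries the partitions of $n$ with largest $r$-repeating part $j$ bijectively onto those with exactly $j$ parts exceeding the $(r-1)$-chain mex. Since $\nu_{\pi}(t)=\pi'_t-\pi'_{t+1}$, the first class corresponds, on the conjugate side, to partitions $\mu=\pi'$ with $\mu_j-\mu_{j+1}\geq r$ and $\mu_k-\mu_{k+1}\leq r-1$ for every $k\geq j+1$ (convention $\mu_t=0$ for $t>\#(\mu)$; for $j=0$ this just says all successive gaps of $\mu$ are $\leq r-1$ and $s(\mu)\leq r-1$, i.e.\ $\mu$ is of the type defining $\mathcal{P}^{0}_{r-1}$). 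For such a $\mu$ one checks that $mex(\mu;r-1)=\mu_{j+1}+1$: the integers $\mu_{j+1}+1,\dots,\mu_{j+1}+r-1$ lie strictly between $\mu_{j+1}$ and $\mu_j$ (or above $\ell(\mu)$ when $j=0$) and so are absent from $\mu$, while below $\mu_{j+1}$ the gap bounds keep every run of absent integers shorter than $r-1$; consequently exactly the $j$ parts $\mu_1,\dots,\mu_j$, each at least $\mu_{j+1}+r>\mu_{j+1}+1$ (using $r\geq 2$), exceed $mex(\mu;r-1)$. Running the same observations in reverse gives the converse: if $\mu$ has exactly $j$ parts above $m:=mex(\mu;r-1)$, then these parts are $\geq m+r-1$ while $\mu_{j+1}\leq m-1$ (since $m,\dots,m+r-2$ are missing), which forces $\mu_j-\mu_{j+1}\geq r$, and minimality of $m$ forbids any further gap of size $\geq r$ below position $j$.

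For $f_j(n)=g_j(n)$, I would combine Glaisher's bijection with a ``conjugate-and-inflate'' step. Given $\pi$ with exactly $j$ multiples of $r$, write $\pi=\alpha\sqcup r\beta$, where $\alpha$ is the ($r$-regular) partition formed by the parts of $\pi$ not divisible by $r$, and $\beta$ is obtained by dividing the $j$ parts of $\pi$ that are divisible by $r$ each by $r$, so $\#(\beta)=j$. Apply Glaisher's map to $\alpha$ to get an $r$-strict $\widehat{\alpha}$ with $|\widehat{\alpha}|=|\alpha|$, and split $\widehat{\alpha}=\widehat{\alpha}_{>}\sqcup\widehat{\alpha}_{\leq}$ into the parts $>j$ and the parts $\leq j$. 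Conjugate $\beta$ to $\beta'$ (whose largest part is exactly $j$) and let $r*\beta'$ be the partition got from $\beta'$ by multiplying every multiplicity by $r$; then $|r*\beta'|=r|\beta'|=r|\beta|$, and $j$ occurs in $r*\beta'$ with multiplicity a positive multiple of $r$. Output $\gamma:=\widehat{\alpha}_{>}\sqcup\widehat{\alpha}_{\leq}\sqcup(r*\beta')$. Then $|\gamma|=|\alpha|+r|\beta|=|\pi|$; every part of $\gamma$ exceeding $j$ comes from the $r$-strict $\widehat{\alpha}_{>}$, hence has multiplicity $\leq r-1$, while $j$ has multiplicity $\geq r$ in $\gamma$, so $\gamma$ has largest $r$-repeating part $j$. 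The inverse is forced: strip from $\gamma$ the parts $>j$ to recover $\widehat{\alpha}_{>}$, and for each $t\leq j$ write the multiplicity of $t$ in $\gamma$ uniquely as $e_t+rd_t$ with $0\leq e_t\leq r-1$, reading off $\widehat{\alpha}_{\leq}$ from the $e_t$ and $\beta'$ (hence $\beta$) from the $d_t$; then $\widehat{\alpha}_{>}\sqcup\widehat{\alpha}_{\leq}$ is again $r$-strict, inverse Glaisher returns $\alpha$, and $\pi=\alpha\sqcup r\beta$.

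I expect the bijection for $f_j(n)=g_j(n)$ to be the main obstacle: one has to verify that $\widehat{\alpha}_{>}\sqcup\widehat{\alpha}_{\leq}$ is $r$-strict (so inverse Glaisher applies), that $d_j\geq 1$ (so that $\beta'$ has largest part exactly $j$ and $\beta$ has exactly $j$ parts), and that the residue/quotient split of the multiplicities at parts $\leq j$ is the only admissible one — together with the degenerate cases $j=0$ and $\beta$ having parts equal to $1$. The conjugation bijection, by contrast, is essentially a direct translation between multiplicity data and gap data.
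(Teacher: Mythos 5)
Your proposal is correct and essentially coincides with the paper's proof: conjugation handles the equality with the $(r-1)$-chain mex count, and the first two counts are matched by separating the multiples of $r$, applying Glaisher's map to the $r$-regular remainder, and conjugating the multiples-of-$r$ piece. Your ``conjugate-and-inflate'' step is the same map in disguise, since $(r\beta)'=r*\beta'$, and the verifications you flag as potential obstacles (the $r$-strictness of the recombined piece, the multiplicity of $j$ being at least $r$, and the uniqueness of the residue/quotient split) go through exactly as in the paper's construction of $\phi_r$ and $\psi_r$.
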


We provide a combinatorial proof of this identity in this paper, as requested in \cite [Problem $2$, Section $4$]{BEM22}. Coming to the second result \cite[Theorem $2.6$]{BEM22}, it is a generalization of Theorem \ref{mex1} to the case of the $r$-chain mex. 

\begin{theorem}\label{mexr3}
If $r$ is a positive integer, then
$$\sum_{n=0}^{\infty}\sum_{\lambda \in \mathcal{P}(n)} mex(\lambda; r)q^{n}=-\frac{(r-1)}{(q;q)_{\infty}}+\frac{(q^{r+1};q^{r+1})_{\infty}}{(q;q)_{\infty}}\sum_{m=1}^{r}\frac{1}{(q^m;q^{r+1})_{\infty}}.$$
\end{theorem}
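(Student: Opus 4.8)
The plan is to compute the generating function $G(q):=\sum_{n\ge0}\sigma\textup{mex}(n;r)q^{n}=\sum_{\lambda}mex(\lambda;r)q^{|\lambda|}$ by first encoding $mex(\lambda;r)$ through a single distinguished sub-partition of $\lambda$. Let $a_{1}<a_{2}<\cdots$ be the distinct parts of $\lambda$, and let $\pi_{\lambda}$ be the maximal initial segment of $\lambda$ lying in $\mathcal{P}_{r}^{0}$, i.e.\ the parts of $\lambda$ that are $\le a_{j}$, where $j$ is the largest index with $a_{1}\le r$ and $a_{i+1}-a_{i}\le r$ for all $i<j$ (and $\pi_{\lambda}=\emptyset$ when $a_{1}>r$). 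Tracking which windows $[i,i+r-1]$ contain a part of $\lambda$ shows $mex(\lambda;r)=\ell(\pi_{\lambda})+1$, with the convention $\ell(\emptyset)=0$, and that
$$\lambda\longmapsto\bigl(\pi_{\lambda},\ \lambda\setminus\pi_{\lambda}\bigr)$$
is a bijection from $\mathcal{P}(n)$ onto the pairs $(\pi,\mu)$ of total size $n$ with $\pi\in\mathcal{P}_{r}^{0}\cup\{\emptyset\}$ and $\mu$ a (possibly empty) partition all of whose parts are $\ge\ell(\pi)+r+1$. Grouping by $\pi$ therefore gives
$$G(q)=\sum_{\pi\in\mathcal{P}_{r}^{0}\cup\{\emptyset\}}(\ell(\pi)+1)\,\frac{q^{|\pi|}}{(q^{\ell(\pi)+r+1};q)_{\infty}}.$$

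Next I would convert this via conjugation. If $\pi$ has distinct parts $a_{1}<\cdots<a_{s}$, then $\pi'$ has $a_{s}$ parts and successive part-multiplicities $a_{1},a_{2}-a_{1},\ldots,a_{s}-a_{s-1}$; hence $\pi\mapsto\pi'$ restricts to a bijection from $\mathcal{P}_{r}^{0}\cup\{\emptyset\}$ onto the set of partitions in which every part occurs at most $r$ times (the $(r+1)$-strict partitions appearing in Glaisher's theorem), and it sends $\ell(\pi)$ to $\#(\pi')$. Writing $\tau=\pi'$, this recasts the identity as
$$G(q)=\sum_{\tau\ (r+1)\text{-strict}}(\#(\tau)+1)\,\frac{q^{|\tau|}}{(q^{\#(\tau)+r+1};q)_{\infty}};$$
equivalently, $\sigma\textup{mex}(n;r)$ enumerates triples $(\tau,j,\mu)$ with $\tau$ an $(r+1)$-strict partition, $0\le j\le\#(\tau)$, and $\mu$ a partition whose every part exceeds $\#(\tau)+r$, with total size $n$. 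Multiplying through by $(q;q)_{\infty}$, using $(q;q)_{\infty}=(q;q)_{N}(q^{N+1};q)_{\infty}$, and collecting the terms with $\#(\tau)=d$, the theorem is reduced to the one-variable identity
$$\sum_{d\ge0}(d+1)f_{d}(q)\,(q;q)_{d+r}=-(r-1)+(q^{r+1};q^{r+1})_{\infty}\sum_{m=1}^{r}\frac{1}{(q^{m};q^{r+1})_{\infty}},$$
where $f_{d}(q)$ is the generating function for $(r+1)$-strict partitions with exactly $d$ parts. Here $\sum_{d}f_{d}(q)=(q^{r+1};q^{r+1})_{\infty}/(q;q)_{\infty}$ is Glaisher's theorem (Theorem~\ref{Glaisher}) with modulus $r+1$, and deleting a largest part in the conjugate $\mathcal{P}_{r}^{0}$ picture yields the recursion $f_{d}(q)=q^{d}\bigl(S_{d}(q)-S_{d-r-1}(q)\bigr)$, with $S_{k}(q)=\sum_{j\le k}f_{j}(q)$ and $S_{k}\equiv0$ for $k<0$; in particular $f_{d}(q)=q^{d}/(q;q)_{d}$ for $0\le d\le r$.

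The final step is to prove this one-variable identity, and this is where I expect the real difficulty to be. I would approach it by summation by parts: the two combinatorial descriptions of $\{\lambda:mex(\lambda;r)\ge k+1\}$ give the telescoping relation $\sum_{L\ge k}f_{L}(q)(q;q)_{L+r}=(q;q)_{k+r-1}\bigl(S_{k+r-1}(q)-S_{k-1}(q)\bigr)$ (for $r=1$ this collapses to the transparent equality $f_{d}(q)(q;q)_{d+1}=q^{\binom{d+1}{2}}-q^{\binom{d+2}{2}}$), whence $\sum_{d\ge0}(d+1)f_{d}(q)(q;q)_{d+r}=\sum_{k\ge0}(q;q)_{k+r-1}\bigl(S_{k+r-1}(q)-S_{k-1}(q)\bigr)$; the plan is to evaluate this last series in closed form using the $q$-binomial theorem and a splitting of the summation index according to residues modulo $r+1$, arriving at the right-hand side with the constant $-(r-1)$ emerging as a boundary contribution. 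The obstacle is precisely this evaluation — reconciling the linear weight $d+1$ with the $d$-dependent denominator $(q^{\#(\tau)+r+1};q)_{\infty}$ and producing the residue-class sum on the right. A fully combinatorial alternative would match the triples $(\tau,j,\mu)$ above, augmented by $r-1$ further copies of $\mathcal{P}(n)$, against the $r$ families consisting of an $(r+1)$-strict partition together with a partition all of whose parts lie in a single fixed nonzero residue class modulo $r+1$; constructing such a (defect-)correspondence is where the bulk of the work lies.
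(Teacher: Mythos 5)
Your reduction is sound as far as it goes. The decomposition of $\lambda$ into its maximal initial chain $\pi_{\lambda}$ and the remainder $\mu$, the identity $mex(\lambda;r)=\ell(\pi_{\lambda})+1$, the conjugation onto $(r+1)$-strict partitions, the recursion $f_{d}=q^{d}(S_{d}-S_{d-r-1})$, the telescoping relation (both of whose sides do enumerate $\{\lambda: mex(\lambda;r)\ge k+1\}$, the right-hand side via splitting $\lambda$ at the threshold $k+r-1$), and the summation by parts all check out. But the argument stops exactly where the content of the theorem lies: you never prove the one-variable identity
$$\sum_{k\ge0}(q;q)_{k+r-1}\bigl(S_{k+r-1}(q)-S_{k-1}(q)\bigr)=-(r-1)+(q^{r+1};q^{r+1})_{\infty}\sum_{m=1}^{r}\frac{1}{(q^{m};q^{r+1})_{\infty}},$$
and you acknowledge as much (``the obstacle is precisely this evaluation''). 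This is not a routine computation that can be deferred: already at $r=1$ it is Gauss's identity $\sum_{k\ge0}q^{k(k+1)/2}=(q^{2};q^{2})_{\infty}/(q;q^{2})_{\infty}$, so for general $r$ you are asking for a generalization of a classical theorem, and neither the $q$-binomial theorem nor a split of $k$ into residue classes modulo $r+1$ visibly produces the right-hand side. Until that identity is established, you have reformulated the theorem rather than proved it.

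For contrast, the paper avoids this analytic endgame altogether. It rewrites the claim as \eqref{modify1}, reads the left side as counting pairs $(\lambda,i)$ with $1\le i\le mex(\lambda;r)+r-1$, and constructs an explicit bijection $\Gamma^{*}$ onto the pairs $(\alpha,\beta)$ generated by the right side: conjugate $\lambda$, cut the conjugate horizontally at row $i$, and apply an operator $\Psi$ that transfers the residual multiplicities modulo $r+1$ from the lower piece to the upper piece, with $r$ colors on the empty $\beta$ accounting for the constant term $r$. In effect the paper keeps the weight $mex(\lambda;r)+r-1$ as a count of cut positions rather than collapsing it into a weighted $q$-series; the ``(defect-)correspondence'' you gesture at in your last sentence is essentially this construction, and carrying it out is the real work of the proof.
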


We give a bijective proof of this identity as well in the sequel, answering Problem $5$ in \cite[Section $4$]{BEM22}. Instead of directly proving the theorem as stated above, we prove an equivalent statement due to a technical reason to be explained shortly. Firstly, note that Theorem \ref{mexr3} can be recast as
\begin{equation} \label{modify1}
\sum_{n=0}^{\infty}\sum_{\lambda\in\mathcal{P}(n)}\left(mex(\lambda; r)+r-1\right)q^{n}=\frac{(q^{r+1};q^{r+1})_{\infty}}{(q;q)_{\infty}}\sum_{m=1}^{r}\frac{1}{(q^m;q^{r+1})_{\infty}}.
\end{equation}
Now on the right hand side of \eqref{modify1}, the fraction of $q$-products
$$\frac{(q^{r+1};q^{r+1})_{\infty}}{(q;q)_{\infty}}$$
is the generating function for $(r+1)$ - strict partitions. Also, the sum
$$\sum_{m=1}^{r}\frac{1}{(q^m;q^{r+1})_{\infty}}$$
generates $(r+1)$ - regular partitions with all parts in the same congruence class modulo $(r+1)$. As the constant term in this sum is $r$, the empty partition must be counted for $r$ times. Before constructing a bijective proof for Theorem \ref{mexr3}, i.e. for \eqref{modify1}, we first prove a similar result that avoids counting the empty partition multiple times.

For any partition $\lambda\in\mathcal{P}(n)$, we define the following weight function:
\begin{equation} \label{omega small}
\omega_r(\lambda)=\begin{cases}0, & \lambda\in \mathcal{P}_{r}^{0}(n), \\
r-1, & \lambda\in \mathcal{P}_{r}^{+}(n).
\end{cases}
\end{equation}
We then have the following companion identity to \eqref{modify1}.
\begin{theorem}\label{mexr2}
Given any positive integer $r$, we have
$$\sum_{n=0}^{\infty}\sum_{\lambda\in\mathcal{P}(n)}(mex(\lambda; r)+\omega_r(\lambda))q^{n}=\frac{(q^{r+1};q^{r+1})_{\infty}}{(q;q)_{\infty}}\left(1+\sum_{n=1}^{\infty}\frac{q^{n}(1-q^{rn})}{(1-q^{n})(q^{r+1};q^{r+1})_{n}}\right).$$
\end{theorem}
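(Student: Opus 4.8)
The plan is to derive Theorem~\ref{mexr2} from Theorem~\ref{mexr3} in the form \eqref{modify1} by a short generating-function computation, the only combinatorial input being an enumeration of $\mathcal{P}_r^0(n)$. Since $\mathcal{P}(n)=\mathcal{P}_r^0(n)\cup\mathcal{P}_r^+(n)$ is a disjoint union and, by \eqref{omega small}, $\omega_r$ equals $r-1$ on $\mathcal{P}_r^+(n)$ and $0$ on $\mathcal{P}_r^0(n)$, the difference $\bigl(mex(\lambda;r)+r-1\bigr)-\bigl(mex(\lambda;r)+\omega_r(\lambda)\bigr)$ is $r-1$ for $\lambda\in\mathcal{P}_r^0(n)$ and $0$ otherwise. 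Summing over all partitions,
\begin{equation*}
\sum_{n\ge0}\sum_{\lambda\in\mathcal{P}(n)}\bigl(mex(\lambda;r)+\omega_r(\lambda)\bigr)q^{n}
=\sum_{n\ge0}\sum_{\lambda\in\mathcal{P}(n)}\bigl(mex(\lambda;r)+r-1\bigr)q^{n}-(r-1)\sum_{n\ge0}|\mathcal{P}_r^0(n)|\,q^{n}.
\end{equation*}
The first sum on the right is supplied by \eqref{modify1}, so the problem reduces to computing $\sum_{n\ge0}|\mathcal{P}_r^0(n)|q^{n}$ and rearranging.

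For the enumeration I would show that conjugation $\pi\mapsto\pi'$ is a bijection from $\mathcal{P}_r^0(n)$ onto the set of $(r+1)$-strict partitions of $n$, whence $\sum_{n\ge0}|\mathcal{P}_r^0(n)|q^{n}=(q^{r+1};q^{r+1})_\infty/(q;q)_\infty$. Indeed, writing $\pi=(\pi_1\ge\cdots\ge\pi_m)$ with $m=\#(\pi)$ and $\pi_{m+1}:=0$, the Young diagram gives $\nu_{\pi'}(v)=\pi_v-\pi_{v+1}$ for $1\le v\le m$; hence the two defining conditions of $\mathcal{P}_r^0(n)$ — every successive gap of $\pi$ is at most $r$, and $s(\pi)=\pi_m\le r$ — say precisely that $\nu_{\pi'}(v)\le r$ for all $v$, i.e.\ that $\pi'$ is $(r+1)$-strict, and (conjugation being an involution) the correspondence is clearly reversible. (This bijection is presumably also the one behind the combinatorial arguments promised later in the paper, so it might be isolated as a lemma.) For the rearrangement, Euler's identity $\sum_{k\ge0}z^{k}/(Q;Q)_k=1/(z;Q)_\infty$ with $Q=q^{r+1}$ and $z=q^m$ yields $1/(q^m;q^{r+1})_\infty=\sum_{k\ge0}q^{mk}/(q^{r+1};q^{r+1})_k$, so that
\begin{equation*}
\sum_{m=1}^{r}\frac{1}{(q^m;q^{r+1})_\infty}
=r+\sum_{k\ge1}\frac{q^{k}+q^{2k}+\cdots+q^{rk}}{(q^{r+1};q^{r+1})_k}
=r+\sum_{k\ge1}\frac{q^{k}(1-q^{rk})}{(1-q^{k})(q^{r+1};q^{r+1})_k}.
\end{equation*}
Substituting \eqref{modify1} and the last two facts into the first display collapses the right-hand side to $\frac{(q^{r+1};q^{r+1})_\infty}{(q;q)_\infty}\bigl(1+\sum_{k\ge1}\frac{q^{k}(1-q^{rk})}{(1-q^{k})(q^{r+1};q^{r+1})_k}\bigr)$, which is the claim.

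The routine parts are the set-partition bookkeeping in the first step and the Eulerian sum in the last; the step demanding genuine care is the enumeration of $\mathcal{P}_r^0(n)$, where one must verify that the two separate constraints defining $\mathcal{P}_r^0$ fuse, under conjugation, into the single constraint ``no part occurs $r$ or more times'' — in particular that the boundary term ($v=m$, i.e.\ the would-be gap $\pi_m-0$) is exactly governed by the hypothesis $s(\pi)\le r$, which is easy to overlook. It is also worth checking the degenerate case $\emptyset\in\mathcal{P}_r^0(0)$, where $mex(\emptyset;r)+\omega_r(\emptyset)=1$ matches the constant term $1$ on the right, so that the empty partition is now counted exactly once.
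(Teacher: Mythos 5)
Your proposal is correct and follows essentially the same route as the paper's own proof: decompose the weight so that the difference from \eqref{modify1} is $(r-1)\sum_n|\mathcal{P}_r^0(n)|q^n$, identify $\mathcal{P}_r^0(n)$ with $(r+1)$-strict partitions via conjugation to get $(q^{r+1};q^{r+1})_\infty/(q;q)_\infty$, and rearrange $\sum_{m=1}^r 1/(q^m;q^{r+1})_\infty$ by the $q$-binomial theorem. The only (welcome) difference is that you spell out the conjugation bijection explicitly, which the paper merely asserts.
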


We first obtain a bijective proof of Theorem \ref{mexr2} and then modify it to construct one for \eqref{modify1}. But the heavy lifting is done in the proof of Theorem \ref{mexr2}. 

Inspired by the minimal excludant, Chern \cite{C21} investigated the maximal excludant of partitions (`maex' for short). He defined the maximal excludant of a partition $\lambda$, denoted by $maex(\lambda)$, to be the largest integer less than the largest part of $\lambda$ that does not appear in $\lambda$. He proved the following result for the maex as an analogue of Theorem \ref{mex1}. Here, $\sigma L (n)$ denotes the sum of largest parts taken over all partitions of $n$, i.e., $\sum_{\pi \in \mathcal{P}(n)} \ell(\pi)$.

\begin{theorem}\label{max1} The sum of maximum excludants of all partitions of $n$, denoted by $\sigma\textup{maex}(n)$, satisfies the following identity:
$$\sum_{n=0}^{\infty}\left(\sigma L(n) - \sigma\textup{maex}(n) \right)q^{n}=\frac{1}{(q;q)_{\infty}}\sum_{n=1}^{\infty}q^n(q^{2};q^{2})_{n-1}.$$
\end{theorem}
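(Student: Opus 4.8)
The plan is to follow the standard generating-function bookkeeping that underlies Theorem \ref{mex1} and its analogues, but organized around the largest part rather than the smallest gap. The key observation is that $\sigma L(n) - \sigma\textup{maex}(n) = \sum_{\pi \in \mathcal{P}(n)} \bigl(\ell(\pi) - maex(\pi)\bigr)$, so I want a combinatorial description of the quantity $\ell(\pi) - maex(\pi)$. When the largest part $\ell(\pi)$ appears and $maex(\pi)$ is the largest integer below $\ell(\pi)$ that is missing, the difference $\ell(\pi) - maex(\pi)$ is exactly the length of the ``run'' of consecutive integers $maex(\pi)+1, maex(\pi)+2, \ldots, \ell(\pi)$, all of which occur as parts of $\pi$. (If $\pi$ is the empty partition, or more generally if every integer in $\{1,\dots,\ell(\pi)\}$ occurs, one handles the degenerate case separately; by convention $maex$ of such a partition is taken so that the formula still reads off the right run length.) Thus $\sum_n \bigl(\sigma L(n)-\sigma\textup{maex}(n)\bigr)q^n$ counts, with weight equal to that top run length, all partitions.

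First I would set up the generating function by conditioning on the top consecutive block. Write a partition $\pi$ with $\ell(\pi) = N$ and top run $N, N-1, \ldots, N-k+1$ (so $k = \ell(\pi)-maex(\pi)$, meaning $N-k$ does \emph{not} appear, with the edge case $N - k = 0$ allowed). The part $N-k$ being absent forces the remaining parts to lie in $\{1, 2, \ldots, N-k-1\}$, an arbitrary partition into parts at most $N-k-1$; meanwhile each of $N-k+1, \ldots, N$ appears at least once, contributing a factor $q^{(N-k+1)+\cdots+N}$ times $1/(q^{N-k+1};q)_{k}$ for the extra copies — equivalently $q^{\binom{N+1}{2}-\binom{N-k+1}{2}}/(q^{N-k+1};q)_k$. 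Summing the weight $k$ against these pieces and then summing over $N$, I expect after reindexing (set $j = N-k$ for the ``free'' bottom block of size at most $j-1$) to arrive at
\[
\sum_{n=0}^{\infty}\bigl(\sigma L(n) - \sigma\textup{maex}(n)\bigr)q^{n}
= \sum_{j\ge 0} \frac{1}{(q;q)_{j-1}}\sum_{k\ge 1} k\, q^{jk + \binom{k}{2}} \cdot (\text{correction for multiplicities}),
\]
and the inner sum over $k$ with the multiplicity factors should telescope/simplify — this is where the identity $\sum_{k\ge1} k q^{\binom{k+1}{2}}/(q;q)_k$-type manipulations, or a direct recognition of $q^n(q^2;q^2)_{n-1}$ as counting a two-colored or self-conjugate-flavored object, comes in. The appearance of $(q^2;q^2)_{n-1}$ strongly suggests that the cleanest route is instead a bijective one: pair the run $N-k+1,\dots,N$ together with a ``multiplicity'' record into a partition into distinct even parts (hence the $(q^2;q^2)$), the factor $1/(q;q)_\infty$ absorbing the unrestricted bottom block and the weight $k$ being recorded as the choice of which part is largest.

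The main obstacle I anticipate is correctly handling the multiplicities of the parts inside the top consecutive run and the boundary between that run and the free lower block, so that no partition is counted with the wrong weight and the degenerate partitions (empty partition, partitions containing all of $1,\dots,\ell(\pi)$) are accommodated by the same formula rather than as exceptions. In particular, tracking the $1/(q^{N-k+1};q)_k$ factor through the reindexing and showing it conspires with the weight $k$ to produce exactly $(q^2;q^2)_{n-1}$ — rather than some messier $q$-product — is the crux; I would verify the first several coefficients of both sides numerically as a sanity check before committing to the algebraic simplification. Once the inner sum is shown to equal $q^{n}(q^2;q^2)_{n-1}$ summed appropriately, the outer factor $1/(q;q)_\infty$ falls out immediately from the free bottom block, completing the proof.
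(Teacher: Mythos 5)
Your combinatorial setup is sound: $\ell(\pi)-maex(\pi)$ is exactly the length of the top run of consecutive part-values $maex(\pi)+1,\dots,\ell(\pi)$, and conditioning on that run (with $j=N-k$ the largest missing value) leads to the double sum
$$\sum_{j\ge 0}\frac{1}{(q;q)_{j-1}}\sum_{k\ge 1}k\,\frac{q^{jk+k(k+1)/2}}{(q^{j+1};q)_{k}},$$
with the convention $(q;q)_{-1}:=1$ for the boundary case $j=0$; I checked that its coefficients agree with $\sigma L(n)-\sigma\textup{maex}(n)$ for small $n$. The genuine gap is that the entire content of the theorem is the reduction of this double sum to $\frac{1}{(q;q)_{\infty}}\sum_{n\ge 1}q^{n}(q^{2};q^{2})_{n-1}$, and you stop precisely there: ``should telescope/simplify'' plus a numerical sanity check is a plan, not a proof, and the weight $k$ sitting inside a $q$-series based at $q^{j+1}$ has no evident telescoping handle as written. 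Your fallback suggestion --- that $(q^{2};q^{2})_{n-1}$ should be realized bijectively as a partition into distinct even parts attached to the top run --- is not substantiated and is not, in fact, where that factor comes from.

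For context: the paper itself does not prove this statement (it is quoted from Chern) but proves the $r$-chain generalization, Theorem \ref{maxr1}, whose $r=1$ case is equivalent to it, and that analytic proof supplies exactly the mechanism you are missing. Instead of weighting each partition by the run length, one splits the left side as $\sum_{\lambda}\ell(\lambda)q^{|\lambda|}-\sum_{\lambda}maex(\lambda)q^{|\lambda|}$, evaluates the first sum as $\frac{\partial}{\partial z}\frac{1}{(zq;q)_{\infty}}\big|_{z=1}=\frac{1}{(q;q)_{\infty}}\sum_{n\ge1}\frac{q^{n}}{1-q^{n}}$, and for the second builds the two-variable series $\sum_{\lambda}z^{maex(\lambda)}q^{|\lambda|}$ by conditioning on $m=maex(\lambda)$ and the number of parts exceeding $m$, applies the $q$-binomial theorem, differentiates at $z=1$, and interchanges the order of summation; the factor $(q^{2};q^{2})_{m}$ then emerges from the telescoping partial sum $\sum_{n=0}^{m-1}q^{2(n+1)}(q^{2};q^{2})_{n}=1-(q^{2};q^{2})_{m}$, after cancellation against the $\sum\ell(\lambda)$ term. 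Introducing the marking variable $z$ and differentiating is the standard device that converts an additive weight like your $k$ into something summable; to complete your version you would either need to do the same (mark the run by $z$ and differentiate) or prove the displayed double-sum identity directly, neither of which the proposal carries out.
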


It is natural to consider a generalization of the maex analogous to the $r$-chain minimal excludant. 
\begin{definition} [$r$-chain maximal excludant] \label{r-chain maex}
For $r\geq1$, we define the $r$-chain maximal excludant (`$r$-chain maex' for short) of a partition $\lambda$, denoted by $maex(\lambda; r)$, to be the largest positive integer $k$ less than the largest part of $\lambda$ such that the \textbf{positive integers} $k,k-1,\ldots,k-r+1$ do not occur as parts in $\lambda$. If there is no such positive integer $k$ for $\lambda$, we set $maex(\lambda; r)=0$.
\end{definition}
From this definition, $maex(\lambda; r) > 0$ if and only if there is a string of at least $r$ consecutive integers missing between two successive parts of $\lambda$ or if the smallest part of $\lambda$ is greater than $r$ or both. This means precisely that $ \lambda \in \mathcal{P}_{r}^{+}(n)$, as defined in the frequently used notations just before the statement of Theorem \ref{mex1}. This also implies that $maex(\lambda; r) = 0$ if and only if $\lambda \in \mathcal{P}(n) \setminus \mathcal{P}_{r}^{+}(n) = \mathcal{P}_{r}^{0}(n)$. We then have the following theorem for the $(r-1)$-chain maex analogous to Theorem \ref{bem1}.

\begin{theorem}\label{bem2}
Suppose that $r\geq2$ and $j\geq1$ are two positive integers. Then the number
of partitions of $n$ where the largest multiple of $r$ occurs exactly $j$ times equals the number of partitions of $n$ whose smallest $r$-repeating part is $j$. Moreover, both these quantities also equal the number of partitions in $\mathcal{P}_{r-1}^+(n)$ with $j$ parts greater than the $(r-1)$-chain maex. 
\end{theorem}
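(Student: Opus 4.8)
The plan is to prove the two asserted equalities by explicit bijections, closely adapting the combinatorial proof of Theorem~\ref{bem1}. For a partition $\lambda$ of $n$ having at least one part divisible by $r$, write $\lambda = \mu \sqcup r\gamma$, where $\mu$ is the subpartition of the parts not divisible by $r$ (so $\mu$ is $r$-regular in the sense of Theorem~\ref{Glaisher}) and $\gamma$ is the arbitrary partition obtained by dividing each part divisible by $r$ by $r$; then $|\lambda| = |\mu| + r|\gamma|$, and the largest multiple of $r$ in $\lambda$ occurs exactly $j$ times precisely when the smallest part of the conjugate $\gamma'$ equals $j$ (whereas $\gamma$ having $j$ parts, i.e.\ $\ell(\gamma')=j$, is the condition relevant to Theorem~\ref{bem1}). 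For the first equality I would then use a Glaisher-type map onto the partitions of $n$ with smallest $r$-repeating part $j$: apply Glaisher's bijection to $\mu$ to obtain an $r$-strict partition $\widetilde\mu$, form $\gamma'$, and let $\tau$ be the partition with $\nu_\tau(a) = \nu_{\widetilde\mu}(a) + r\,\nu_{\gamma'}(a)$ for every $a$. Since every part of $\gamma'$ is $\ge j$ while every multiplicity in $\widetilde\mu$ is $<r$, the parts of $\tau$ below $j$ retain multiplicity $<r$ while $j$ acquires multiplicity $\ge r$, so the smallest $r$-repeating part of $\tau$ is exactly $j$ and $|\tau|=n$. The inverse reads off the $r$-repeating parts of $\tau$, splits each of their multiplicities as the remainder modulo $r$ plus $r$ times the quotient to recover $\widetilde\mu$ and $\gamma'$, and then undoes Glaisher's bijection and conjugation. (For $j=0$ the first two quantities of the theorem still agree, by Glaisher's theorem, but the third vanishes because the $(r-1)$-chain maex is always smaller than the largest part; this is why the hypothesis $j\ge1$ is imposed.)

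The second equality is the substantive part, and rests on a structural description of $\mathcal{P}_{r-1}^{+}(n)$. For $\lambda \in \mathcal{P}_{r-1}^{+}(n)$ put $m = maex(\lambda;r-1)$; one checks that $m \ge r-1$ and that ``$m$ is the $(r-1)$-chain maex of $\lambda$'' is equivalent to: the $r-1$ consecutive integers $m-r+2, \ldots, m$ are all absent from $\lambda$; every part of $\lambda$ is either $\le m-r+1$ or $\ge m+1$; the integer $m+1$ is a part; and successive distinct parts exceeding $m$ differ by at most $r-1$. Hence $\lambda$ is encoded by a triple $(m, B, T)$ in which $B$ is an arbitrary partition with parts $\le m-r+1$ and $T$ (the parts $>m$) has exactly $j$ parts, smallest part exactly $m+1$, and gaps at most $r-1$. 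Subtracting $m$ from every part of $T$ and conjugating turns $T$ into a pair $(m, W)$ with $W$ an $r$-strict partition with parts $\le j-1$ — the part $j$ occurs exactly once in the conjugate, recording that the shifted top has smallest part $1$. On the other side, a partition $\tau$ of $n$ with smallest $r$-repeating part $j$ is encoded by $(\tau_{<j}, \nu_\tau(j), \tau_{>j})$, where $\tau_{<j}$ is an $r$-strict partition with parts $\le j-1$, $\nu_\tau(j) \ge r$, and $\tau_{>j}$ is an arbitrary partition with parts $\ge j+1$. Matching $W$ with $\tau_{<j}$ and cancelling their (equal) sizes, there remains the identity $j\bigl(\nu_\tau(j)-r\bigr) + |\tau_{>j}| = j(m-r+1) + |B|$ to realize; I would do so by subtracting $j$ from every part of $\tau_{>j}$, conjugating, and absorbing the resulting number of parts into the counter $m-r+1$ — an Euler-type ``carrying'' bijection whose generating-function shadow is $\sum_{e\ge0} q^{je}/(q^{j+1};q)_{\infty} = \sum_{l\ge0} q^{jl}/(q;q)_{l}$. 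Reassembling the pieces produces the desired bijection between the partitions of $n$ with smallest $r$-repeating part $j$ and the partitions in $\mathcal{P}_{r-1}^{+}(n)$ with $j$ parts exceeding the $(r-1)$-chain maex.

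The main obstacle is this second equality, and within it the passage from partitions to structural data: pinning down the exact dictionary for $\mathcal{P}_{r-1}^{+}(n)$ in terms of the $(r-1)$-chain maex — the length-$(r-1)$ missing block just below the maex, the gap constraint above it, and the complete freedom below it — and then organizing the bookkeeping so that the ``free large parts together with a multiplicity counter'' data on the $r$-repeating side is matched, via conjugation and the carrying step, with the ``position of the missing block together with free small parts'' data on the maex side. Once this dictionary is fixed, verifying that each step is a size-preserving bijection is routine.
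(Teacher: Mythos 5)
Your proposal is correct and follows essentially the same route as the paper's combinatorial proof: the first equality via Glaisher's bijection on the $r$-regular component combined with conjugation of the multiples-of-$r$ component (your $\nu_\tau(a)=\nu_{\widetilde\mu}(a)+r\nu_{\gamma'}(a)$ is exactly the paper's $f(\pi_o)*\pi_r'$), and the second equality via conjugation. One remark: the three-piece bookkeeping you build for the second equality --- the triple $(m,B,T)$, the shift-and-conjugate of $T$, and the ``carrying'' step matching $(m-r+1,B)$ with $(\nu_\tau(j)-r,\tau_{>j})$ --- composes to nothing other than the single conjugation $\lambda\mapsto\lambda'$, which is how the paper dispatches this part in one sentence, so what you flag as the main obstacle disappears once this is recognized.
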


\begin{example}
Let $r=3$ and $j=1$ and consider the partitions in $\mathcal{P}_{2}^{+}(7)$, i.e. the partitions of $7$ whose $2$-chain maex is positive with exactly one part greater than it. One can check that the partitions $7, \ 6+1, \ 5+2, \ 5+1+1$ and $4+1+1+1$ are precisely the ones in this category. Now, coming to the partitions of $7$ whose smallest $3$-repeating part is $1$, we see that $4+1+1+1, \ 3+1+1+1+1, \ 2+2+1+1+1, \ 2+1+1+1+1+1$ and $1+1+1+1+1+1+1$ are the required ones. Finally, partitions of $7$ whose largest multiple of $3$ occurs exactly once are $6+1, \ 4+3, \ 3+2+2, \ 3+2+1+1$ and $3+1+1+1+1$. In all the cases, there are an equal number of partitions (exactly five) as expected.
\end{example}

For a partition $\lambda\in\mathcal{P}(n)$, we define another weight function:
\begin{equation} \label{omega capital}
\Omega_r(\lambda)=\begin{cases}1, & \lambda\in \mathcal{P}_{r}^{0}(n), \\
r, & \lambda\in \mathcal{P}_{r}^{+}(n).
\end{cases}
\end{equation}

Then we have the following generalization of Theorem \ref{max1}, which can be considered to be a maximal excludant analogue for Theorem \ref{mexr3}.

\begin{theorem}\label{maxr1}
For any positive integer $r$, we have
\begin{equation} \label{maxr1 id}
\sum_{n=0}^{\infty}\sum_{\lambda\in\mathcal{P}(n)}(\ell(\lambda) - maex(\lambda; r)+\Omega_{r}(\lambda))q^{n}=\frac{(q^{r+1};q^{r+1})_{\infty}}{(q;q)_{\infty}}
+\frac{1}{(q;q)_{\infty}}\sum_{n=1}^{\infty}\frac{q^n(q^{r+1};q^{r+1})_{n}}{1-q^n}.
\end{equation}
\end{theorem}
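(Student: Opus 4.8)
The plan is to show that each side of \eqref{maxr1 id} equals $\sum_{\pi} c_r(\pi)\,q^{|\pi|}$, the sum over all partitions $\pi$, for one and the same weight $c_r(\pi)$: set $c_r(\pi)=\#(\pi)+1$ when $\pi$ is $(r+1)$-strict, and $c_r(\pi)=\#(\pi_{<a})+r+1$ otherwise, where $a$ is the smallest part of $\pi$ of multiplicity $\geq r+1$ and $\pi_{<a}$ is the subpartition consisting of the parts of $\pi$ that are strictly below $a$. I would identify the left-hand side of \eqref{maxr1 id} with this generating function via conjugation, and bring the right-hand side to the same shape by a short $q$-product manipulation followed by a combinatorial reading; together this supplies both the analytic and the combinatorial content.

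For the left-hand side, write $\mathcal{P}(n)=\mathcal{P}_r^0(n)\cup\mathcal{P}_r^+(n)$ and recall that $maex(\lambda;r)=0$ precisely on $\mathcal{P}_r^0(n)$, where $\Omega_r\equiv 1$, while $\Omega_r\equiv r$ on $\mathcal{P}_r^+(n)$. Conjugation is size-preserving, carries $\mathcal{P}_r^0(n)$ bijectively onto the $(r+1)$-strict partitions of $n$ (the maximal runs of equal parts of $\lambda'$ are exactly the gaps of $\lambda$, hence $\leq r$), and satisfies $\ell(\lambda)=\#(\lambda')$; thus the $\mathcal{P}_r^0$-part contributes exactly $\sum_{\pi\ (r+1)\text{-strict}}(\#(\pi)+1)q^{|\pi|}$. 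The heart of the argument is a lemma describing $maex(\cdot;r)$ under conjugation. Writing $\pi=\lambda'$ and using that a positive integer $v$ is absent from $\lambda$ iff the $v$th and $(v+1)$th parts of $\pi$ coincide, one checks that $k,k-1,\dots,k-r+1$ are all absent from $\lambda$ iff $\pi$ is constant on the $r+1$ positions $k-r+1,\dots,k+1$; such an admissible $k$ exists (equivalently $\lambda\in\mathcal{P}_r^+$) iff $\pi$ has a part of multiplicity $\geq r+1$, and among the runs of $\pi$ of length $\geq r+1$ the largest admissible $k$ sits in the \emph{lowest} such run, i.e.\ the one of the part $a$. This gives $maex(\lambda;r)=\#\{i:\pi_i\geq a\}-1$, whence $\ell(\lambda)-maex(\lambda;r)=\#(\pi)-\#\{i:\pi_i\geq a\}+1=\#(\pi_{<a})+1$ and the $\mathcal{P}_r^+$-part contributes $\sum_{\pi\ \text{not}\ (r+1)\text{-strict}}(\#(\pi_{<a})+r+1)q^{|\pi|}$. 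Adding the two pieces produces $\sum_\pi c_r(\pi)q^{|\pi|}$.

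For the right-hand side, first rewrite $\dfrac{q^n(q^{r+1};q^{r+1})_n}{1-q^n}=q^n(q^{r+1};q^{r+1})_{n-1}\,\dfrac{1-q^{(r+1)n}}{1-q^n}=\sum_{t=0}^{r}q^{(t+1)n}(q^{r+1};q^{r+1})_{n-1}$, and apply the telescoping identity $\sum_{n\geq 1}Q^n(Q;Q)_{n-1}=1-(Q;Q)_\infty$ (with $Q=q^{r+1}$) to the $t=r$ summand; this collapses the right-hand side of \eqref{maxr1 id} to $\dfrac{1}{(q;q)_\infty}\left(1+\sum_{j=1}^{r}\sum_{n\geq 1}q^{jn}(q^{r+1};q^{r+1})_{n-1}\right)$. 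Now $\dfrac{q^{jn}(q^{r+1};q^{r+1})_{n-1}}{(q;q)_\infty}=q^{jn}\cdot\dfrac{(q^{r+1};q^{r+1})_{n-1}}{(q;q)_{n-1}}\cdot\dfrac{1}{(q^n;q)_\infty}$ is the generating function for partitions $\pi$ in which the value $n$ occurs at least $j$ times and every part below $n$ occurs at most $r$ times; summing over $j\in\{1,\dots,r\}$ weights such a pair $(\pi,n)$ by $\min(r,\nu_\pi(n))$, and summing over $n$ then contributes $\#(\pi)q^{|\pi|}$ when $\pi$ is $(r+1)$-strict and $(\#(\pi_{<a})+r)q^{|\pi|}$ when it is not. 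Combining with the leftover $\dfrac{1}{(q;q)_\infty}=\sum_\pi q^{|\pi|}$ reproduces $\sum_\pi c_r(\pi)q^{|\pi|}$, which equals the left-hand side, proving \eqref{maxr1 id}. The main obstacle is the conjugation lemma for $maex(\cdot;r)$ — pinning down that the top admissible index corresponds to the lowest multiplicity-$(\geq r+1)$ part of $\lambda'$ and tracking the resulting off-by-one in $\ell(\lambda)-maex(\lambda;r)$; once that is secured, the $q$-series side is routine bookkeeping.
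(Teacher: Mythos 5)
Your argument is correct, and it takes a genuinely different route from both of the paper's proofs. The paper's analytic proof splits the left side as $\sum\ell(\lambda)q^{n}-\sum(maex(\lambda;r)-\Omega_{r}(\lambda))q^{n}$, evaluates the first piece by logarithmic differentiation of $1/(zq;q)_{\infty}$ and the second by differentiating a two-variable series $\mathbb{P}_{r}^{+}(z,q)$ that tracks the $r$-chain maex; its bijective proof instead builds an explicit bijection between pairs $(\lambda,i)$ and pairs $(\alpha,\beta)$ of partitions via a horizontal cut of $\lambda'$ and a redistribution operator $\Phi$. You do neither: your key new ingredient is the closed formula $maex(\lambda;r)=\#\{i:\lambda'_{i}\geq a\}-1$ for $\lambda\in\mathcal{P}_{r}^{+}(n)$, where $a$ is the smallest part of $\lambda'$ of multiplicity at least $r+1$. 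I checked the run analysis behind it, including the boundary requirements $k\geq r$ and $k<\ell(\lambda)$, and it is right; it collapses $\ell(\lambda)-maex(\lambda;r)$ to the single statistic $\#(\pi_{<a})+1$ on $\pi=\lambda'$ and eliminates any need for differentiation. On the right-hand side you read the series, after the telescoping step $\sum_{n\geq1}Q^{n}(Q;Q)_{n-1}=1-(Q;Q)_{\infty}$ absorbs the product term, as a single weighted sum $\sum_{\pi}c_{r}(\pi)q^{|\pi|}$ over individual partitions, whereas the paper interprets it as a product generating pairs $(\alpha,\beta)$. The trade-off: the paper's bijective proof delivers an explicit weight-preserving correspondence between two families of combinatorial objects, which yours does not; yours buys economy and transparency, since one conjugation lemma plus elementary bookkeeping replaces both the calculus of the analytic proof and the case analysis of the operator $\Phi$, and it exhibits an intrinsic meaning of the statistic $\ell(\lambda)-maex(\lambda;r)+\Omega_{r}(\lambda)$ under conjugation. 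Both of your verifications (the $\mathcal{P}_{r}^{0}$ versus $(r+1)$-strict correspondence, and the reading of $q^{jn}(q^{r+1};q^{r+1})_{n-1}/(q;q)_{\infty}$ as partitions with $\nu_{\pi}(n)\geq j$ and all parts below $n$ of multiplicity at most $r$) are sound, so the two sides indeed meet at the same weight $c_{r}$.
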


The rest of this paper is organized as follows. In Section \ref{bijective proofs}, we give combinatorial proofs for Theorem \ref{bem1} and Theorem \ref{bem2}, along with a $q$-series proof for Theorem \ref{bem2}. In Section \ref{Identity on the sum of $r$-chain minimal excludant}, we demonstrate Theorem \ref{mexr2} and Theorem \ref{mexr3} combinatorially. Finally, in Section \ref{The sum of $r$-chain maximal excludants}, we prove Theorem \ref{maxr1} both analytically and bijectively.

\section{Proofs of Theorem \ref{bem1} and Theorem \ref{bem2}} \label{bijective proofs}
We first give a combinatorial proof of Theorem \ref{bem1} and then proceed to
prove Theorem \ref{bem2} both analytically and combinatorially.

\begin{proof}[Combinatorial proof of Theorem \ref{bem1}][]

Firstly, note that partitions with $j$ parts greater than the $(r-1)$-chain mex on conjugation give rise to partitions whose largest $r$-repeating part is $j$. Now, we give a bijective proof of the equinumerosity of partitions having $j$ multiples of $r$ and those with largest $r$-repeating part $j$. We denote by $E_{j, r}(n)$ and $L_{j, r}(n)$, respectively, the collection of partitions of $n$ into $j$ multiples of $r$ and the collection whose largest $r$-repeating part is $j$. Before establishing a bijection between them, we recall a bijection due to Glaisher \cite{glaisher} between the set $\mathcal{O}_r(n)$ of partitions of an integer $n$ into parts not divisible by $r$ and the set $\mathcal{D}_r(n)$ of partitions of $n$ where each integer repeats less than $r$ times. Let 
\begin{equation} \label{f map} 
f: \mathcal{O}_r(n) \rightarrow \mathcal{D}_r(n)
\end{equation}
be the map acting on a partition $\pi \in \mathcal{O}_r(n)$ by merging every $r$ copies of the same integer until there is no integer which repeats $r$ times or more. And in the reverse direction, for a partition in $\mathcal{D}_r(n)$, we split each multiple of $r$ into $r$ equal parts until there is no multiple of $r$ remaining. It is readily seen that this process is the inverse of $f$ and so $f$ is a bijection. Next, we define the `concatenation' of two partitions $\alpha, \mu$ in $\mathcal{P}$ (the two partitions need not partition the same integer $m$) to be another partition $\alpha * \mu$ obtained by taking together all the parts in $\alpha$ and $\mu$. For example, if $\alpha = (3^2, 1^3)$ and $\mu = (5, 2^3, 1^2)$, then $\alpha * \mu = (5, 3^2, 2^3, 1^5)$.

We now construct a bijective map $\phi_r: E_{j, r}(n) \rightarrow L_{j, r}(n)$. A partition $\pi$ in $E_{j, r}(n)$ can be written uniquely as $\pi_o * \pi_r$, with $\pi_r$ consisting of all the parts that are multiples of $r$ and the remaining parts being included in $\pi_o$. Since $\pi \in E_{j, r}(n)$, it has $j$ multiples of $r$ and so $\pi_r = (r\ell_1, r\ell_2, \dots, r\ell_j)$ for some $\ell_1 \geq \ell_2 \geq \cdots \geq \ell_j \geq 1$. Now put $\tilde{\pi_o}:= f(\pi_o)$, where $f$ is the map defined in \eqref{f map}. Then $\Tilde{\pi_o}$ is a partition where each integer appears less than $r$ times. Further, define $\tilde{\pi_r} := \pi_r^{'}$, and observe that $\tilde{\pi_r}$ has largest part equal to $j$. Moreover, each integer in $\tilde{\pi_r}$ repeats a multiple of $r$ times. In other words, $j$ appears at least $r$ times in $\tilde{\pi_r}$. Finally, noting that $\tilde{\pi_o} * \tilde{\pi_r}$ is a partition with largest $r$-repeating part $j$, we set $\phi_r(\pi) := \tilde{\pi_o} * \tilde{\pi_r}$.
We next define a map $\psi_r: L_{j, r}(n) \rightarrow E_{j, r}(n)$, which will turn out to be the inverse of $\phi_r$. Suppose that $\nu \in L_{j, r}(n)$, so that its largest $r$-repeating part is $j$. Write $\nu = (m^{f_m}, \dots, j^{f_j}, \dots, 1^{f_1})$ where $\ell(\nu) = m, \ f_j \geq r,  \ f_t \geq 0$ for $t<j$ and $0 \leq f_t < r$ for $t>j$. Employing the division algorithm, we have $f_i = rq_i + s_i$, where $0 \leq s_i < r$ for each $i$. Next, define $\nu_r := ( j^{rq_j}, \dots, 1^{rq_1})$ and $\nu_o := (m^{f_m}, \dots, j^{s_j}, \dots, 1^{s_1})$. Note that since $0 \leq s_i < r$ and $0 \leq f_t < r$ for $t>j$, $\nu_o$ is a partition where each integer appears less than $r$ times. Next, set $\kappa := f^{-1} (\nu_o)$, which is an $r$-regular partition, and $ \gamma := \nu_r^{'}$, which is a partition with parts that are multiples of $r$ such that $\#(\gamma) = j$. As a result, $\kappa * \gamma \in E_{j, r}(n)$ and we put $\psi_r(\nu) := \kappa * \gamma$. It is easy to check that $\psi_r$ is indeed the inverse of $\phi_r$.
\end{proof}

\begin{example}
This is an illustration in the case when $r=3$. Take $\pi := (9, 7, 6^3, 1^4)$. Note that there are $j=4$ multiples of $r$. Split this into $\pi_r := (9, 6^3)$ and $\pi_o := (7, 1^4)$. Then $\tilde{\pi_o} := f(\pi_o) = (7, 3, 1)$ and $\tilde{\pi_r} := \pi_r^{'} = (4^6, 1^3)$. Therefore, $\phi_r(\pi) = \tilde{\pi_o} * \tilde{\pi_r} = (7, 4^6, 3, 1^4)$, which has its largest $3$-repeating part equal to $4$. Conversely, beginning with $\nu = (7, 4^6, 3, 1^4)$, we see that $\nu_r = (4^6, 1^3)$ and $\nu_o = (7, 3, 1)$. Now, $\kappa := f^{-1} (\nu_o) = (7, 1^4)$ and $\gamma := \nu_r^{'} = (9, 6^3)$ and thereby $\psi_r(\nu) = \kappa * \gamma = (9, 7, 6^3, 1^4) = \pi$.  
\end{example}

Before proceeding to the proof of Theorem \ref{bem2} and beyond, we note down a crucial ingredient, namely, the $q$-binomial theorem: For $|q| < 1$ and any $a \in \mathbb{C}$, we have
\begin{equation}\label{q-binomial}
\sum_{n=0}^{\infty} \frac{(a; q)_n}{(q; q)_n} z^n = \frac{(az; q)_{\infty}}{(z; q)_{\infty}}.
\end{equation}

\begin{proof} [Analytic proof of Theorem \ref{bem2}][]
The proof proceeds in two parts. We first prove that the partitions in $\mathcal{P}_{r-1}^{+}(n)$ having $j$ parts greater than their $(r-1)$-chain maex are equinumerous with partitions of $n$ whose smallest $r$-repeating part is $j$. 

\textbf{Part 1}: Suppose $k \geq 1$ is the $(r-1)$-chain maex of a partition $\pi$ and that there are $j$ parts greater than $k$. This means that $k, \ k-1, \ \cdots, k-r+2$ are all positive integers and missing from $\pi$. First of all, since $k-r+2 \geq 1$, we have $k \geq r-1$. Note that $\pi$ comprises two pieces, the first piece $\pi_1$ consisting of parts less than $k-r+2$ with no restriction on what and how many parts may occur. So the generating function for this piece is 
\begin{equation} \label{GFn first piece}
\frac{1}{(q; q)_{k-r+1}}.
\end{equation}
Coming to the second piece $\pi_2$ of $\pi$, it consists of those parts of $\pi$ that are greater than $k$. Note that $k + 1$ must be a part, for otherwise it becomes the $(r-1)$-chain maex of $\pi$. Also, parts in $\pi_2$ must occur in such a manner so that no string of $r-1$ consecutive integers should be missing (to ensure that $k$ is the $(r-1)$-chain maex of $\pi$). That means in $\pi_2$, the gaps between successive parts must be at most $r-1$. Moreover, since there are $j$ parts in $\pi$ greater than the $(r-1)$-chain maex $k$, this translates to $\pi_2$ having exactly $j$ parts. So, in summary, $s(\pi_2) = k+1, \ \#(\pi) = j$, and gaps between successive parts $\leq r-1$. By conjugation, this implies $\nu(\ell(\pi_2^{'})) = k+1, \ \ell(\pi_2^{'}) = j$ and that each integer in $\pi_2^{'}$ repeats less than $r$ times. The generating function for such partitions (and hence for those of the type $\pi_2$) is:
\begin{equation} \label{GFn second piece}
q^{j(k+1)} \prod_{n=1}^{j-1} (1 + q^n + \cdots + q^{n(r-1)}).
\end{equation}
Putting together the generating functions for the two pieces $\pi_1$ and $\pi_2$, i.e., \eqref{GFn first piece} and \eqref{GFn second piece} and summing over the possible values of $k$, we can write the generating function for partitions $\pi$ as follows:
\begin{align} 
\sum_{k=r-1}^{\infty} \frac{q^{j(k+1)}}{(q; q)_{k-r+1}} \prod_{n=1}^{j-1} (1 + q^n + \cdots + q^{n(r-1)})  &= \prod_{n=1}^{j-1} (1 + q^n + \cdots + q^{n(r-1)}) \sum_{k=r-1}^{\infty} \frac{q^{j(k+1)}}{(q; q)_{k-r+1}} \nonumber \\
&= \prod_{n=1}^{j-1} (1 + q^n + \cdots + q^{n(r-1)}) \sum_{k=0}^{\infty} q^{jr} \frac{q^{jk}}{(q; q)_k} \nonumber \\
&= \frac{q^{jr}}{(q^j; q)_{\infty}} \prod_{n=1}^{j-1} (1 + q^n + \cdots + q^{n(r-1)}) \nonumber \\
&= \frac{q^{jr}}{1- q^j} \times \frac{1}{(q^{j+1}; q)_{\infty}} \prod_{n=1}^{j-1} (1 + q^n + \cdots + q^{n(r-1)}), \label{j parts >}
\end{align}
where we invoked \eqref{q-binomial} in the penultimate equality. Now, the expression in \eqref{j parts >}
is precisely the generating function for partitions with smallest $r$-repeating part $j$.

\textbf{Part 2}: Coming to the second part of the theorem, we prove that the expression in \eqref{j parts >} also generates partitions with the largest multiple of $r$ occurring exactly $j$ times. We write
{\allowdisplaybreaks
\begin{align}
\frac{q^{jr}}{1- q^j} \times \frac{1}{(q^{j+1}; q)_{\infty}} \prod_{n=1}^{j-1} (1 + q^n + \cdots + q^{n(r-1)}) &= \frac{q^{jr}}{(q^j; q)_{\infty}} \times \frac{(q^r; q^r)_{j-1}}{(q; q)_{j-1}} \nonumber \\
&= \frac{q^{jr}}{(q; q)_{\infty}} \times \frac{(q^r; q^r)_{\infty}}{(q^{jr}; q^r)_{\infty}} \nonumber \\
&= \frac{q^{jr} (q^r; q^r)_{\infty}}{(q; q)_{\infty}} \sum_{k=0}^{\infty} \frac{(q^{jr})^k}{(q^r; q^r)_k}  \quad \text{(by \eqref{q-binomial})} \nonumber \\
&= \frac{(q^r; q^r)_{\infty}}{(q; q)_{\infty}} \sum_{k=1}^{\infty} \frac{q^{krj}}{(q^r; q^r)_{k-1}} \nonumber \\
&= \frac{(q^r; q^r)_{\infty}}{(q; q)_{\infty}} \sum_{k=1}^{\infty} \frac{1}{1 - q^r} \times  \dots \times \frac{1}{1 - q^{(k-1)r}} \times (q^{kr})^j, \nonumber
\end{align}}
and the last expression is precisely the generating function for partitions with $j$ occurrences of the largest multiple of $r$.  
\end{proof}

\begin{proof} [Combinatorial proof of Theorem \ref{bem2}][]
The proof closely parallels the proof of Theorem \ref{bem1} and so, we give only a sketch of the ideas involved. Firstly, note that partitions in $\mathcal{P}_{r-1}^+(n)$ with $j$ parts greater than the $(r-1)$-chain maex on conjugation give rise to unrestricted partitions whose smallest $r$-repeating part is $j$. Let $H_{j, r}(n)$ and $S_{j, r}(n)$ denote the collection of partitions of $n$ in which the largest multiple of $r$ has multiplicity $j$ and that collection whose smallest $r$-repeating part is $j$ respectively. 
Take any partition $\pi$ in $H_{j, r}(n)$, then it can be expressed uniquely in the form $\pi_o * \pi_r$, with $\pi_r$ containing only multiples of $r$ and all the remaining parts in $\pi_o$. As $\pi \in H_{j, r}(n)$, it has $j$ copies of the largest multiple of $r$. We can write it as $\pi_r = (\underbrace{r\ell_1, \dots, r\ell_1}_{j \ \text{times}}, r\ell_2, \dots, r\ell_j)$ for some $\ell_1 > \ell_2 \geq \cdots \geq \ell_j \geq 1$. Next, set $\tilde{\pi_o}:= f(\pi_o)$, where $f$ is Glaisher's bijection, as defined in the proof of Theorem \ref{bem1}. Further, we also define $\tilde{\pi_r} := \pi_r^{'}$, and note that $\tilde{\pi_r}$ is a partition where each integer has multiplicity a multiple of $r$ with smallest part being $j$. So, $\tilde{\pi_o} * \tilde{\pi_r}$ is a partition whose smallest $r$-repeating part is $j$, and we define $\Phi_r: H_{j, r}(n) \to S_{j, r}(n)$ by $\Phi_r(\pi) := \tilde{\pi_o} * \tilde{\pi_r}$.

In the other direction, suppose that $\nu \in S_{j, r}(n)$, so that the smallest $r$-repeating part of $\nu$ equals $j$. Then $\nu$ can be represented in the form $\nu = (m^{f_m}, \dots, j^{f_j}, \dots, 1^{f_1})$ with $\ell(\nu) = m, \ f_j \geq r,  \ f_k \geq 0$ for $k > j$ and $0 \leq f_k < r$ for $k < j$. Using the division algorithm, we can write $f_i = rq_i + s_i$, where $0 \leq s_i < r$ for each $i$. We now define $\nu_r := ( m^{rq_m}, \dots, j^{rq_j})$ and $\nu_o := (m^{s_m}, \dots, j^{s_j}, (j-1)^{f_{j-1}}, \dots, 1^{f_1})$. We can see that $\nu_o$ is a partition with each integer appearing less than $r$ times and hence $\kappa := f^{-1} (\nu_o)$ is an $r$-regular partition. Also, $ \gamma := \nu_r^{'}$ is a partition with parts that are multiples of $r$ such that the largest part has multiplicity $j$. Therefore, $\kappa * \gamma \in H_{j, r}(n)$ and setting $\Psi_r(\nu) := \kappa * \gamma$, we see that $\Psi_r$ is the inverse of $\Phi_r$. In this way, we have a constructed a bijection between $H_{j, r}(n)$ and $S_{j, r}(n)$ and the proof is complete.
\end{proof}

\section{Bijective proofs of identities for the sum of $r$-chain minimal excludants} \label{Identity on the sum of $r$-chain minimal excludant}

In this section, we give a combinatorial proof of Theorem \ref{mexr3}. Before going to that, we first prove Theorem \ref{mexr2}. 


\begin{proof} [Proof of Theorem \ref{mexr2}][]
Note that from the definition of the weight $\omega_r$ in \eqref{omega small}, we have
\begin{equation} \label{modify2}
\sum_{n=0}^{\infty}\sum_{\lambda\in\mathcal{P}(n)}(mex(\lambda; r)+\omega_r(\lambda))q^{n}=\sum_{n=0}^{\infty}\sum_{\lambda\in\mathcal{P}(n)}(mex(\lambda; r)+r-1)q^{n}-\sum_{n=0}^{\infty}\sum_{\lambda\in \mathcal{P}_{r}^{0}(n)}(r-1)q^{n},
\end{equation}
and the first sum on the right hand side is given by \eqref{modify1}. So it remains to calculate the second sum on the right hand side. We have
\begin{align*}
\sum_{n=0}^{\infty}\sum_{\lambda\in \mathcal{P}_{r}^{0}(n)}(r-1)q^{n}
=(r-1)\sum_{n=0}^{\infty}| \mathcal{P}_{r}^{0}(n)|q^{n} = (r-1)\frac{(q^{r+1};q^{r+1})_{\infty}}{(q;q)_{\infty}}.
\end{align*}
And this used along with the right hand side of \eqref{modify1} turns \eqref{modify2} into 
\begin{equation} \label{modify3}
\sum_{n=0}^{\infty}\sum_{\lambda\in\mathcal{P}(n)}(mex(\lambda; r)+\omega_r(\lambda))q^{n}=\frac{(q^{r+1};q^{r+1})_{\infty}}{(q;q)_{\infty}}\sum_{m=1}^{r}\frac{1}{(q^m;q^{r+1})_{\infty}}-(r-1)\frac{(q^{r+1};q^{r+1})_{\infty}}{(q;q)_{\infty}}.
\end{equation}
Now for any integer $1\leq m\leq r$, we have
$$
\frac{1}{(q^m;q^{r+1})_{\infty}}
=\sum_{n=0}^{\infty}\frac{q^{nm}}{(q^{r+1};q^{r+1})_{n}}
=1+\sum_{n=1}^{\infty}\frac{q^{nm}}{(q^{r+1};q^{r+1})_{n}},
$$
thus giving us
$$\sum_{m=1}^{r}\frac{1}{(q^m;q^{r+1})_{\infty}}=\sum_{m=1}^{r}\left(1+\sum_{n=1}^{\infty}\frac{q^{nm}}{(q^{r+1};q^{r+1})_{n}}\right)=r+\sum_{n=1}^{\infty}\frac{q^{n}(1-q^{rn})}{(1-q^{n})(q^{r+1};q^{r+1})_{n}},$$
and consequently
$$\frac{(q^{r+1};q^{r+1})_{\infty}}{(q;q)_{\infty}}\sum_{m=1}^{r}\frac{1}{(q^m;q^{r+1})_{\infty}}
=\frac{(q^{r+1};q^{r+1})_{\infty}}{(q;q)_{\infty}}\left(r+\sum_{n=1}^{\infty}\frac{q^{n}(1-q^{rn})}{(1-q^{n})(q^{r+1};q^{r+1})_{n}}\right).$$
Therefore, from \eqref{modify3}, we finally have
$$\sum_{n=0}^{\infty}\sum_{\lambda\in\mathcal{P}(n)}(mex(\lambda; r)+\omega_r(\lambda))q^{n}=
\frac{(q^{r+1};q^{r+1})_{\infty}}{(q;q)_{\infty}}\left(1+\sum_{n=1}^{\infty}\frac{q^{n}(1-q^{rn})}{(1-q^{n})(q^{r+1};q^{r+1})_{n}}\right).$$
\end{proof}
We will first construct a bijection for Theorem \ref{mexr2} and then extend it to prove \eqref{modify1}. For this, we interpret Theorem \ref{mexr2} as counting certain objects.

Let $S_r(n)$ be the number of $r$ - strict partitions of $n$. Let $M_r(n)$ be the number of partitions of $n$ such that the multiplicity of the largest part is not divisible by $r$ while all the other parts must have a multiplicity that is divisible by $r$. Then we have
\begin{align*}
\sum_{n=0}^{\infty}S_{r+1}(n)q^{n}&=\frac{(q^{r+1};q^{r+1})_{\infty}}{(q;q)_{\infty}},
\end{align*}
and
\begin{align}
\sum_{n=0}^{\infty} M_{r+1}(n)q^{n}
&= 1+\sum_{n=1}^{\infty}\left(\frac{q^n}{1-q^n}-\frac{q^{(r+1)n}}{1-q^{(r+1)n}}\right) \prod_{i=1}^{n-1}(1+q^{(r+1)i}+q^{2(r+1)i}+\cdots) \nonumber \\
&= 1 + \sum_{n=1}^{\infty} \frac{q^n (1 - q^{rn})}{(1 - q^n) (1-q^{(r+1)n})} \times \frac{1}{(q^{r+1};q^{r+1})_{n-1}} \nonumber \\
&= 1+\sum_{n=1}^{\infty}\frac{q^{n}(1-q^{rn})}{(1-q^{n})(q^{r+1};q^{r+1})_{n}}. \nonumber
\end{align}

So in Theorem \ref{mexr2}, the coefficient of $q^{n}$ on the left hand side is the number of ordered pairs $(\lambda, i)$ such that
\begin{equation} \label{lambda, i}
\lambda \in \mathcal{P}(n), \quad \text{and} \quad 
\begin{cases} 1\leq i \leq mex(\lambda; r), & \text{if} \quad \lambda\in \mathcal{P}_{r}^{0}(n), \\
1\leq i\leq mex(\lambda; r)+r-1, & \text{if} \quad \lambda\in \mathcal{P}_{r}^{+}(n).
\end{cases}
\end{equation}
On the other hand, the coefficient of $q^{n}$ on the right hand side in Theorem \ref{mexr2} is the number of ordered pairs of partitions $(\alpha,\beta)$ such that
\begin{align}
& |\alpha| + |\beta| = n, \quad \text{$\alpha$ is an $(r+1)$ - strict partition}, \nonumber \\ 
& \text{$\nu_{\beta}(\ell(\beta))$ is not divisible by $r+1$, \quad \text{and} \quad  $\nu_{\beta}(t)$ is divisible by $r+1$ for $t < \ell(\beta)$}. \label{alpha, beta}
\end{align}
So now our aim is to construct a bijection between the pairs of partitions in \eqref{lambda, i} and \eqref{alpha, beta}.

We first introduce some operators on partitions.
\begin{itemize}
    \item \textbf{Conjugate}

For any partition $\lambda=(\lambda_1,\ldots,\lambda_\ell)$, let $\lambda'$ denote the conjugate of $\lambda$.
    \item \textbf{The horizontal cut}

For any integer $1\leq i\leq \ell+1$, we define two partitions $\lambda\uparrow^{(i)}$ and $\lambda\downarrow^{(i)}$ as
$$\lambda\uparrow^{(i)}=(\lambda_1,\lambda_2,\ldots,\lambda_{i-1})\ \text{and}\ \lambda\downarrow^{(i)}=(\lambda_i,\lambda_{i+1},\ldots,\lambda_\ell),$$
with $\lambda\uparrow^{(1)}=\lambda\downarrow^{(\ell + 1)}=\emptyset$. Note that in view of the Ferrers diagram for a partition, this means that the Ferrers diagram of $\lambda$ can be cut into two parts horizontally between $\lambda_{i-1}$ and $\lambda_i$. Then $\lambda\uparrow^{(i)}$ and $\lambda\downarrow^{(i)}$ are the partitions corresponding to the upper part and lower part respectively.

    \item \textbf{The $\Psi$ operator}

Let $(\alpha,\beta)$ be a pair of partitions. We are going to define an operator $\Psi$ on such pairs. To this end, we write $\beta$ in a more compact notation:
$$\beta=\left(\beta_{1}^{\nu(\beta_1)},\beta_{2}^{\nu(\beta_2)},\ldots,\beta_{k}^{\nu(\beta_k)}\right),$$
where $\beta_1>\beta_2>\cdots>\beta_{k}$. Then for each $i$ we can find unique nonnegative integers $q_i,h_i$ such that
$$\nu(\beta_i)=q_i(r+1)+h_i,$$
with $r$ being a fixed positive integer and $0 \leq h_i \leq r$. Now we retain all the copies of $\beta_1$ in $\beta$, and for each $2\leq i\leq k$, we move $h_i$ copies of $\beta_i$ from $\beta$ to $\alpha$. For convenience, let the resultant new versions of $\alpha$ and $\beta$, by abuse of notation, be denoted by $\Psi(\alpha)$ and $\Psi(\beta)$ respectively. Then we define $\Psi(\alpha,\beta) :=(\Psi(\alpha),\Psi(\beta))$.
\end{itemize}

Based on the definition of the operator $\Psi$, the following properties are immediate.
\begin{lemma}\label{a1}
If the smallest part of $\alpha$ is greater than or equal to the largest part of $\beta$, and $\alpha$ is an $(r+1)$-strict partition. Then $\Psi(\alpha)$ is also an $(r+1)$-strict partition.
\end{lemma}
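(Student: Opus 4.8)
The plan is to unwind the definition of $\Psi$ and check that the operation of moving parts from $\beta$ into $\alpha$ cannot create a repeated block of size $r+1$ in the new $\alpha$. First I would fix notation as in the statement: write $\beta = (\beta_1^{\nu(\beta_1)}, \ldots, \beta_k^{\nu(\beta_k)})$ with $\beta_1 > \cdots > \beta_k$, and for $2 \leq i \leq k$ write $\nu(\beta_i) = q_i(r+1) + h_i$ with $0 \leq h_i \leq r$; then $\Psi(\alpha)$ is obtained from $\alpha$ by adjoining $h_i$ copies of $\beta_i$ for each such $i$ (no copies of $\beta_1$ are moved). The key observation is that in $\Psi(\alpha)$, the multiplicity of any integer $t$ is either (a) $\nu_\alpha(t)$, if $t$ is not among $\beta_2, \ldots, \beta_k$; (b) $\nu_\alpha(t) + h_i$, if $t = \beta_i$ for some $2 \leq i \leq k$.

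In case (a) there is nothing to prove, since $\alpha$ was already $(r+1)$-strict, so $\nu_\alpha(t) \leq r < r+1$. In case (b) I would use the hypothesis that $s(\alpha) \geq \ell(\beta)$: since $\beta_i \leq \ell(\beta) = \beta_1$ and in fact $\beta_i < \beta_1$ for $i \geq 2$, while every part of $\alpha$ is $\geq \ell(\beta) = \beta_1 > \beta_i$, the integer $\beta_i$ simply does not occur in $\alpha$ at all; hence $\nu_\alpha(\beta_i) = 0$ and the new multiplicity is just $h_i \leq r < r+1$. Therefore every integer occurs fewer than $r+1$ times in $\Psi(\alpha)$, which is exactly the assertion that $\Psi(\alpha)$ is $(r+1)$-strict.

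The only mild subtlety — and the step I would be most careful about — is the interaction between $\alpha$ and the moved parts: one must confirm that the parts being injected ($\beta_2, \ldots, \beta_k$) are strictly smaller than $\beta_1 = \ell(\beta) \leq s(\alpha)$, so they land entirely "below" the support of $\alpha$ and cannot merge with any existing part of $\alpha$. This is immediate from $\beta_1 > \beta_2 > \cdots > \beta_k$ together with the hypothesis $s(\alpha) \geq \ell(\beta)$. After that, the proof is a one-line multiplicity count. Hence there is no real obstacle; the statement follows directly from the definitions once the support separation between $\alpha$ and the transplanted parts is made explicit.
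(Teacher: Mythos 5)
Your proof is correct and is exactly the verification the paper has in mind: the paper states this lemma as ``immediate'' from the definition of $\Psi$, and your argument -- that the transplanted parts $\beta_2,\ldots,\beta_k$ are strictly below $s(\alpha)$ and so enter with multiplicity $h_i\leq r$ without merging with existing parts of $\alpha$ -- supplies precisely the missing details. No gaps.
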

\begin{lemma}\label{a2}
In $\Psi(\beta)$, only the largest part may have multiplicity not divisible by $r+1$.
\end{lemma}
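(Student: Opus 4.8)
The plan is to simply unwind the definition of the operator $\Psi$ and apply the division algorithm part-by-part. Recall that we write $\beta=\left(\beta_{1}^{\nu(\beta_1)},\ldots,\beta_{k}^{\nu(\beta_k)}\right)$ with $\beta_1>\beta_2>\cdots>\beta_{k}$, and that for each $i$ we have $\nu(\beta_i)=q_i(r+1)+h_i$ with $0\le h_i\le r$. By construction, $\Psi$ leaves all $\nu(\beta_1)$ copies of $\beta_1$ untouched and, for each $2\le i\le k$, removes exactly $h_i$ copies of $\beta_i$ from $\beta$ (transferring them to $\alpha$).

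First I would observe that the largest part of $\Psi(\beta)$ is still $\beta_1$: the operator $\Psi$ only deletes copies of parts from $\beta$ and never inserts new parts into it, and the multiplicity of $\beta_1$ is preserved, so $\beta_1$ remains a part of $\Psi(\beta)$ and no larger part can appear. Thus the phrase ``the largest part'' refers to $\beta_1$ both before and after applying $\Psi$.

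Next, for each index $2\le i\le k$, the multiplicity of $\beta_i$ in $\Psi(\beta)$ is $\nu(\beta_i)-h_i=q_i(r+1)$, which is divisible by $r+1$; if $q_i=0$ the part $\beta_i$ disappears from $\Psi(\beta)$ altogether, and multiplicity $0$ is trivially divisible by $r+1$. Hence every part of $\Psi(\beta)$ other than $\beta_1$ has multiplicity divisible by $r+1$, which is exactly the claim. (The multiplicity of $\beta_1$ in $\Psi(\beta)$ is still $\nu(\beta_1)=q_1(r+1)+h_1$, which may or may not be divisible by $r+1$, consistent with the wording ``only the largest part may have multiplicity not divisible by $r+1$''.)

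There is no genuine obstacle here; the only point warranting a word of care is the remark that $\Psi$ does not change which part of $\beta$ is the largest one, so that the possibly-exceptional part in $\Psi(\beta)$ is indeed its largest part. Everything else is an immediate consequence of the defining recipe for $\Psi$ together with the division-algorithm decomposition $\nu(\beta_i)=q_i(r+1)+h_i$.
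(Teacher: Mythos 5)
Your proof is correct and follows exactly the route the paper intends: the paper states this lemma as ``immediate'' from the definition of $\Psi$, and your argument simply spells out that immediacy --- $\beta_1$ keeps all its copies and remains the largest part, while each $\beta_i$ with $i\ge 2$ is left with $q_i(r+1)$ copies. Nothing is missing.
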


Now we are ready to construct a  bijection $\Gamma$ for Theorem \ref{mexr2}.

\begin{proof}[Bijective proof of Theorem \ref{mexr2}][]
Given any partition $\lambda$ and an integer $i$ with $1\leq i\leq mex(\lambda; r)+\omega_r(\lambda)$, we consider the following cases.

\textbf{Case 1.} Let $(\lambda,i)$ be an ordered pair such that $\lambda\in \mathcal{P}_{r}^{0}(n)$ and $1\leq i\leq mex(\lambda; r)$.

In this case, let $\lambda'=(\lambda'_1,\lambda'_2, \dots )$ be the conjugate of $\lambda$. We apply the operator $\Psi$ on $(\lambda'\uparrow^{(i)},\lambda'\downarrow^{(i)})$, with the claim that $\Psi(\lambda'\uparrow^{(i)},\lambda'\downarrow^{(i)})$ is of the type described in \eqref{alpha, beta}. Indeed, by the definition of $\mathcal{P}_{r}^{0}(n)$, we have $\lambda_{j}-\lambda_{j+1}\leq r $ for all $j$ less than the length of $\lambda$, and the smallest part of $\lambda$ is also less than or equal to $r$. Thus in the conjugate $\lambda'$, each part appears at most $r$ times. So this is also true in $\lambda'\uparrow^{(i)}$. Now by Lemma \ref{a1}, it follows that each part in $\Psi(\lambda'\uparrow^{(i)})$ also
repeats at most $r$ times. As for $\Psi(\lambda'\downarrow^{(i)})$, we only need to show that the multiplicity of its largest part is not a mutiple of $(r+1)$. And this is indeed the case since the largest part of $\Psi(\lambda'\downarrow^{(i)})$ is nothing but the largest part of $\lambda'\downarrow^{(i)}$, which has multiplicity less than $r+1$. We thus set $\Gamma(\lambda,i)=\Psi(\lambda'\uparrow^{(i)},\lambda'\downarrow^{(i)})$.

\textbf{Case 2.} Let $(\lambda,i)$ be an ordered pair such that 
$\lambda\in \mathcal{P}_{r}^{+}(n)$ and $1\leq i\leq mex(\lambda; r)-1$.

In this case, the mapping is the same as that in Case $1$. By the definition of $mex(\lambda; r)$, one can observe that $\lambda'\uparrow^{(i)}$ is still an $(r+1)$ - strict partition. And the largest part of $\lambda'\downarrow^{(i)}$ still has multiplicity less than $r+1$. Thus $\Psi(\lambda'\uparrow^{(i)},\lambda'\downarrow^{(i)})$ is once again of the desired type. So we again let $\Gamma(\lambda,i)=\Psi(\lambda'\uparrow^{(i)},\lambda'\downarrow^{(i)})$.

\textbf{Case 3.} Let $(\lambda,i)$ be an ordered pair such that 
$\lambda\in \mathcal{P}_{r}^{+}(n)$ and $mex(\lambda; r)\leq i\leq mex(\lambda; r)+r-1$.

In this case, the first step is still to find $\Psi(\lambda'\uparrow^{(i)},\lambda'\downarrow^{(i)})$. Now let $G(\lambda)$ be the number of parts in $\lambda$ that are greater than $mex(\lambda; r)$ and let $k$ be the multiplicity of $G(\lambda)$ in $\lambda'$. Then $k \geq (r+1)$ and  for $mex(\lambda; r)\leq i\leq mex(\lambda; r)+k-1$, we have $\lambda'_i=G(\lambda)$. So this time the largest part of $\lambda'\downarrow^{(i)}$, which will be equal to $G(\lambda)$, has  multiplicity $k-(i-mex(\lambda; r))$ in $\lambda'\downarrow^{(i)}$. And since this may or may not be a multiple of $(r+1)$, we need to consider two subcases as follows.

\textbf{Subcase 3.1.} If $k-(i-mex(\lambda; r))\not\equiv 0\ (\text{mod}\ (r+1))$, that means the multiplicity of the largest part of $\Psi(\lambda'\downarrow^{(i)})$ is not divisible by $(r+1)$. So $\Psi(\lambda'\uparrow^{(i)},\lambda'\downarrow^{(i)})$ is exactly what we need. Thus, set $\Gamma(\lambda,i)=\Psi(\lambda'\uparrow^{(i)},\lambda'\downarrow^{(i)})$ in this subcase.

\textbf{Subcase 3.2.} If $k-(i-mex(\lambda; r))\equiv 0\ (\text{mod}\ (r+1))$, then we move $r-(i-mex(\lambda; r))$ copies of $G(\lambda)$ from $\Psi(\lambda'\downarrow^{(i)})$ to $\Psi(\lambda'\uparrow^{(i)})$. Let the modified versions of $\Psi(\lambda'\downarrow^{(i)})$ and $\Psi(\lambda'\uparrow^{(i)})$ be denoted by $\Psi^*(\lambda'\downarrow^{(i)})$ and $\Psi^*(\lambda'\uparrow^{(i)})$ respectively. Now we have $r$ copies of $G(\lambda)$ in $\Psi^*(\lambda'\uparrow^{(i)})$ and $k-r$ copies of $G(\lambda)$ in $\Psi^*(\lambda'\downarrow^{(i)})$. Since $0 \leq i-mex(\lambda; r) \leq r-1$, we see that $1 \leq r-(i-mex(\lambda; r)) \leq r$, and so $k - r = \left(k - (i-mex(\lambda; r))\right) - \left(r-(i-mex(\lambda; r))\right)$ is not divisible by $(r+1)$. Hence, we put $\Gamma(\lambda,i)= \left(\Psi^*(\lambda'\uparrow^{(i)}), \Psi^*(\lambda'\downarrow^{(i)}) \right)$ for this subcase. This finishes the construction of the mapping $\Gamma$. 

We have seen that $\Gamma(\lambda,i)$ satisfies the constraints on the pair $(\alpha,\beta)$ in \eqref{alpha, beta} in all the cases. Now we need to show that it is invertible. Suppose that we are given a pair of partitions $(\alpha,\beta)$ such that $\alpha$ is an $(r+1)$ - strict partition, and in $\beta$, the multiplicity of the largest part is not divisible by $(r+1)$, while the other parts have multiplicities divisible by $(r+1)$. Then we wish to recover the pair $(\lambda,i)$ such that
$$|\lambda|=|\alpha|+|\beta|, \quad 1\leq i\leq mex(\lambda; r)+\omega_r(\lambda),\ \text{and} \quad \Gamma(\lambda, i) = (\alpha,\beta).$$ 
It is straightforward to find the corresponding $\lambda$, we simply need to put all the parts of $\alpha$ and $\beta$ together, rearrange them in the weakly decreasing order and take the conjugate. And now with $\lambda$ in hand, we can find $G(\lambda)$ and its multiplicity in $\lambda'$, let it be $k$. 

If $G(\lambda)$ has multiplicity $r$ in $\alpha$, then this would correspond to Subcase $3.2.$, and we know that $i-mex(\lambda; r)\equiv k\ (\text{mod}\ (r+1))$. Now since $mex(\lambda; r)\leq i\leq mex(\lambda; r)+r-1$ and $k \not \equiv r \ (\text{mod}\ (r+1))$, we can determine $i$ uniquely.

On the other hand, suppose the multiplicity of $G(\lambda)$ in $\alpha$ is different from $r$. Then it would correspond to one of the other cases. But note that in each of those cases, since $\lambda'\uparrow^{(i)}$ consists of parts greater than or equal to the largest part of $\lambda'\downarrow^{(i)}$, applying the operator $\Psi$ simply moves some (possibly none) of the parts in $\lambda'\downarrow^{(i)}$ that are less than the largest part of $\lambda'\downarrow^{(i)}$ to $\lambda'\uparrow^{(i)}$ giving us $\Psi(\lambda'\uparrow^{(i)},\lambda'\downarrow^{(i)})$. There is no transfer of the largest parts in $\lambda'\downarrow^{(i)}$ to $\lambda'\uparrow^{(i)}$. In other words, $\ell(\lambda'\downarrow^{(i)}) = \ell(\Psi(\lambda'\downarrow^{(i)})) = \ell(\beta)$. So to get $i$, we just need to count the number of parts in $\alpha$ that are greater than or equal to the largest part of $\beta$ and add $1$ to it. This concludes the bijective proof of the theorem.
\end{proof}

Next we extend the definition of $\Gamma$ to $\Gamma^*$, which gives a bijective proof for Theorem \ref{mexr3}.

\begin{proof}[Bijective proof of \eqref{modify1}][]
Note that if $(\alpha,\beta)$ is a pair of partitions counted by the right hand side of Theorem \ref{mexr2}, then $(\alpha,\beta')$ will also be a pair counted by the right hand side of \eqref{modify1}. So the only difference is that the pairs $(\alpha;\beta)$ must be counted multiple times when $\beta$ is the empty partition. To this end, we assume that the empty partition can be assigned any of $r$ different colors. We will denote by $(\alpha;\emptyset_{i})$ the pair with arbitrary $\alpha$ and $\beta$ being the empty partition and  assigned the $i$-th color (for $1 \leq i \leq r$).

Now, given a partition $\lambda$, we consider two different cases. If $\lambda\in \mathcal{P}_{r}^{+}(n)$, then $mex(\lambda; r)+r-1$ is less than $\ell(\lambda)$, i.e., the largest part of $\lambda$. And our bijection $\Gamma$ works fine on them, so we take $\Gamma^*(\lambda; i)=\Gamma(\lambda, i)$. Also, in this case, if $(\alpha;\beta)$ is the image of $\lambda$ under our map, then $\beta$ will never be empty. Since the length of the conjugate of $\lambda$, which is just $\ell(\lambda)$, is greater than $mex(\lambda; r)+r-1$, there will always be some parts assigned to $\beta$.

For the partitions $\lambda\in \mathcal{P}_{r}^{0}(n)$, we have $mex(\lambda; r)=\ell(\lambda)+1$. In this case, $\lambda'$, the conjugate of $\lambda$, has length $\ell(\lambda)$, which means we can only cut it horizontally in $\ell(\lambda)+1$ different ways. In this case, we fail to find the other $(r-1)$ images of $\lambda$. But this can be easily fixed as we can assign different colors to $\beta$ when it is empty. Suppose we have a pair $(\lambda; i)$ where $\lambda$ is a partition in $\mathcal{P}_{r}^{0}(n)$ while $1\leq i\leq mex(\lambda; r)+r-1$ is an integer. If $1\leq i\leq mex(\lambda; r)-1=\ell(\lambda)$, then we can apply our map $\Gamma$ as usual and $\beta$ will not be empty, so we define $\Gamma^*(\lambda;i)=\Gamma(\lambda;i)$ as well. And for $mex(\lambda; r)\leq i\leq mex(\lambda; r)+r-1$, we define the image as $\Gamma^*(\lambda; i)= (\lambda',\emptyset_{i-mex(\lambda; r)+1})$. The inverse of this additional definition is clear. So this finishes the proof.
\end{proof}

\section{The sum of $r$-chain maximal excludants} \label{The sum of $r$-chain maximal excludants}

In this section, we prove Theorem \ref{maxr1}.

\begin{proof}[Analytic proof of Theorem \ref{maxr1}][]
The left hand side of \eqref{maxr1 id} can be written as
\begin{equation} \label{2 sums}
\sum_{n=0}^{\infty}\sum_{\lambda\in\mathcal{P}(n)}\ell(\lambda) q^{n}-\sum_{n=0}^{\infty}\sum_{\lambda\in\mathcal{P}(n)}(maex(\lambda; r)-\Omega_r(\lambda))q^{n}
\end{equation}
and we treat the two double sums separately. 

For the first double sum, we proceed as follows.
\begin{align}
\sum_{n=0}^{\infty}\sum_{\lambda\in\mathcal{P}(n)}\ell(\lambda) q^{n}
=\left.\frac{\partial}{\partial{z}}\sum_{n=0}^{\infty}\frac{z^{n}q^{n}}{(q;q)_{n}}\right|_{z=1} &= \left.\frac{\partial}{\partial{z}}\frac{1}{(zq;q)_{\infty}}\right|_{z=1} \nonumber \\
&=\frac{1}{(zq;q)_{\infty}}\left.\frac{\partial}{\partial{z}}\log\frac{1}{(zq;q)_{\infty}}\right|_{z=1} \nonumber \\
&=\frac{1}{(zq;q)_{\infty}}\left.\frac{\partial}{\partial{z}}\sum_{n=1}^{\infty}-\log(1-zq^n)\right|_{z=1} \nonumber \\
&=\frac{1}{(q;q)_{\infty}}\sum_{n=1}^{\infty}\frac{q^n}{1-q^n}. \label{ell lambda}
\end{align}

For the second double sum in \eqref{2 sums}, as required by the definitions of $maex_{r}(\lambda)$ and $\Omega_r(\lambda)$, we divide the partitions of $n$ into two classes. Doing this gives us
\begin{equation} \label{2 further sums}
\sum_{n=0}^{\infty}\sum_{\lambda\in\mathcal{P}(n)}(maex(\lambda; r)-\Omega_r(\lambda))q^{n}=\sum_{n=0}^{\infty}\sum_{\lambda\in \mathcal{P}_{r}^{0}(n)}(maex(\lambda; r)-\Omega_r(\lambda))q^{n}+\sum_{n=0}^{\infty}\sum_{\lambda\in \mathcal{P}_{r}^{+}(n)}(maex(\lambda; r)-\Omega_r(\lambda))q^{n}.
\end{equation}

For the partitions $\lambda$ in $\mathcal{P}_{r}^{0}(n)$, we have $maex(\lambda; r)-\Omega_r(\lambda)=0-1=-1$. Thus,
\begin{equation} \label{inter inter}
\sum_{n=0}^{\infty}\sum_{\lambda\in \mathcal{P}_{r}^{0}(n)}(maex(\lambda; r)-\Omega_r(\lambda))q^{n}
=\sum_{n=0}^{\infty}\sum_{\lambda\in \mathcal{P}_{r}^{0}(n)}(-1)q^{n}
=-\sum_{n=0}^{\infty}| \mathcal{P}_{r}^{0}(n)|q^{n}
=-\frac{(q^{r+1};q^{r+1})_{\infty}}{(q;q)_{\infty}},
\end{equation}
where the last equality arises by taking the conjugates of the partitions in $\mathcal{P}_{r}^{0}(n)$.

Let $\mathcal{P}_{r}^{+}(m,n)$ be the subcollection of partitions in $\mathcal{P}_{r}^{+}(n)$ with $maex(\lambda; r)=m$. We count these partitions according to the number $\ell$ of parts that are greater than $m$ in $\lambda$. We have
\begin{align}
\mathbb{P}_{r}^+(z,q) := \sum_{n=0}^{\infty}\sum_{m=r}^{\infty}|\mathcal{P}_{r}^{+}(m,n)|z^mq^n
&=\sum_{m=r}^{\infty}\sum_{\ell=1}^{\infty}\frac{z^mq^{(m+1)\ell}}{(q;q)_{m-r}}\frac{(q^{r+1};q^{r+1})_{\ell-1}}{(q;q)_{\ell-1}} \nonumber \\
&=\sum_{m=0}^{\infty}\sum_{\ell=1}^{\infty}\frac{z^{m+r}q^{(m+r+1)\ell}}{(q;q)_{m}}\frac{(q^{r+1};q^{r+1})_{\ell-1}}{(q;q)_{\ell-1}} \nonumber \\
&=\sum_{\ell=1}^{\infty}\frac{(q^{r+1};q^{r+1})_{\ell-1}}{(q;q)_{\ell-1}}z^{r}q^{(r+1)\ell}
\sum_{m=0}^{\infty}\frac{z^{m}q^{m\ell}}{(q;q)_{m}} \nonumber \\
&=\sum_{\ell=1}^{\infty}\frac{(q^{r+1};q^{r+1})_{\ell-1}}{(q;q)_{\ell-1}}z^{r}q^{(r+1)\ell}
\frac{1}{(zq^{\ell};q)_{\infty}} \nonumber \\
&=z^{r}\sum_{n=0}^{\infty}\frac{q^{(r+1)(n+1)}(q^{r+1};q^{r+1})_{n}}{(q;q)_n(zq^{n+1};q)_{\infty}}. \label{Przq}
\end{align}

Note that
\begin{equation} \label{another 2 sums}
\sum_{n=0}^{\infty}\sum_{\lambda\in \mathcal{P}_{r}^+(n)}(maex(\lambda; r)-\Omega_r(\lambda))q^{n}=\left.\frac{\partial}{\partial{z}}\mathbb{P}_{r}^+(z,q)\right|_{z=1}-r\mathbb{P}_{r}^+(1,q).
\end{equation}
Now we calculate the two terms in the right hand side above. Firstly, by \eqref{Przq}, observe that
\begin{align}
\left.\frac{\partial}{\partial{z}}\mathbb{P}_{r}^+(z, q)\right|_{z=1}
&=\left.\frac{\partial}{\partial{z}}z^{r}\sum_{n=0}^{\infty}\frac{q^{(r+1)(n+1)}(q^{r+1};q^{r+1})_{n}}{(q;q)_n(zq^{n+1};q)_{\infty}}\right|_{z=1} \nonumber \\
&=r\mathbb{P}_{r}^+(1, q)+\left.\frac{\partial}{\partial{z}}\sum_{n=0}^{\infty}\frac{q^{(r+1)(n+1)}(q^{r+1};q^{r+1})_{n}}{(q;q)_n(zq^{n+1};q)_{\infty}}\right|_{z=1}. \label{inter przq}
\end{align}
Thus, from \eqref{Przq} and \eqref{inter przq}, we have 
{\allowdisplaybreaks
\begin{align}
\sum_{n=0}^{\infty}\sum_{\lambda\in \mathcal{P}_{r}^{+}(n)}(maex(\lambda; r)-\Omega_r(\lambda))q^{n}
&=\left.\frac{\partial}{\partial{z}}\sum_{n=0}^{\infty}\frac{q^{(r+1)(n+1)}(q^{r+1};q^{r+1})_{n}}{(q;q)_n(zq^{n+1};q)_{\infty}}\right|_{z=1} \nonumber \\
&=\sum_{n=0}^{\infty}\frac{q^{(r+1)(n+1)}(q^{r+1};q^{r+1})_{n}}{(q;q)_n}\left.\frac{\partial}{\partial{z}}\frac{1}{(zq^{n+1};q)_{\infty}}\right|_{z=1} \nonumber \\
&=\sum_{n=0}^{\infty}\frac{q^{(r+1)(n+1)}(q^{r+1};q^{r+1})_{n}}{(q;q)_n(zq^{n+1};q)_{\infty}}\left.\frac{\partial}{\partial{z}}\log\frac{1}{(zq^{n+1};q)_{\infty}}\right|_{z=1} \nonumber \\
&=\sum_{n=0}^{\infty}\frac{q^{(r+1)(n+1)}(q^{r+1};q^{r+1})_{n}}{(q;q)_n(zq^{n+1};q)_{\infty}}\sum_{m=1}^{\infty}\left.\frac{\partial}{\partial{z}} \left\{-\log(1-zq^{n+m})\right\} \right|_{z=1} \nonumber \\
&=\sum_{n=0}^{\infty}\frac{q^{(r+1)(n+1)}(q^{r+1};q^{r+1})_{n}}{(q;q)_n(q^{n+1};q)_{\infty}}\sum_{m=1}^{\infty}\frac{q^{n+m}}{1-q^{n+m}} \nonumber \\
&=\frac{1}{(q;q)_{\infty}}\sum_{n=0}^{\infty}q^{(r+1)(n+1)}(q^{r+1};q^{r+1})_{n}\sum_{m=n+1}^{\infty}\frac{q^{m}}{1-q^{m}} \nonumber \\
&=\frac{1}{(q;q)_{\infty}}\sum_{m=1}^{\infty}\frac{q^{m}}{1-q^{m}}\sum_{n=0}^{m-1}q^{(r+1)(n+1)}(q^{r+1};q^{r+1})_{n} \nonumber \\
&=\frac{1}{(q;q)_{\infty}}\sum_{m=1}^{\infty}\frac{q^{m}}{1-q^{m}}(1-(q^{r+1};q^{r+1})_{m}). \label{final inter}
\end{align}}
Finally, using \eqref{ell lambda}, \eqref{2 further sums}, \eqref{inter inter}, and \eqref{final inter}, we obtain
{\allowdisplaybreaks \begin{align*}
\sum_{n=0}^{\infty}\sum_{\lambda\in\mathcal{P}(n)}(\ell(\lambda)-maex(\lambda; r)+\Omega_r(\lambda))q^{n}
=&\sum_{n=0}^{\infty}\sum_{\lambda\in\mathcal{P}(n)}\ell(\lambda) q^{n}-\sum_{n=0}^{\infty}\sum_{\lambda\in\mathcal{P}(n)}(maex(\lambda; r)-\Omega_r(\lambda))q^{n}\\
=&\sum_{n=0}^{\infty}\sum_{\lambda\in\mathcal{P}(n)}\ell(\lambda) q^{n}-\left(\sum_{n=0}^{\infty}\sum_{\lambda\in \mathcal{P}_{r}^{0}(n)}(maex(\lambda; r)-\Omega_r(\lambda))q^{n}\right.\\
&\left.+\sum_{n=0}^{\infty}\sum_{\lambda\in \mathcal{P}_{r}^{+}(n)}(maex(\lambda; r)-\Omega_r(\lambda))q^{n}\right)\\
=&\frac{1}{(q;q)_{\infty}}\sum_{n=1}^{\infty}\frac{q^n}{1-q^n}-\left(-\frac{(q^{r+1};q^{r+1})_{\infty}}{(q;q)_{\infty}}\right.\\
&+\left.\frac{1}{(q;q)_{\infty}}\sum_{m=1}^{\infty}\frac{q^{m}}{1-q^{m}}(1-(q^{r+1};q^{r+1})_{m})\right)\\
=&\frac{1}{(q;q)_{\infty}}\sum_{n=1}^{\infty}\frac{q^n}{1-q^n}+
\frac{(q^{r+1};q^{r+1})_{\infty}}{(q;q)_{\infty}}\\
&-\frac{1}{(q;q)_{\infty}}\sum_{m=1}^{\infty}\frac{q^{m}}{1-q^{m}}(1-(q^{r+1};q^{r+1})_{m})\\
=&\frac{(q^{r+1};q^{r+1})_{\infty}}{(q;q)_{\infty}}+\frac{1}{(q;q)_{\infty}}\sum_{m=1}^{\infty}\frac{q^{m}}{1-q^{m}}(q^{r+1};q^{r+1})_{m}.
\end{align*}}
This finishes the proof.
\end{proof}

Next we aim to give a combinatorial proof for Theorem \ref{maxr1}, which will be similar to the bijection $\Gamma$ for Theorem \ref{mexr2}. We first need to interpret the right hand side of Theorem \ref{maxr1} as the generating function of certain partition-theoretic objects. Recall that
\begin{equation}\label{midmid1}
\sum_{n=0}^{\infty}S_{r+1}(n)q^{n} = \frac{(q^{r+1};q^{r+1})_{\infty}}{(q;q)_{\infty}}
\end{equation}
is the generating function for $(r+1)$ - strict partitions. Let $W_{r}(n)$ be the number of partitions of $n$ such that only the smallest part may have multiplicity not congruent to $0$ mod $r$. Then we have
\begin{align}
\sum_{n=0}^{\infty}W_{r+1}(n)q^{n}&=1+\sum_{n=1}^{\infty}\frac{q^n}{1-q^n}
\prod_{i=1}^{\infty}(1+q^{(r+1)(n+i)}+q^{2(r+1)(n+i)}+\cdots) \nonumber \\
&=1+\sum_{n=1}^{\infty}\frac{q^n}{1-q^n}\frac{1}{(q^{(n+1)(r+1)};q^{r+1})_{\infty}}\nonumber \\
&=1+\frac{1}{(q^{r+1};q^{r+1})_{\infty}}\sum_{n=1}^{\infty}\frac{q^n(q^{r+1};q^{r+1})_{n}}{1-q^n}. \label{midmid2}
\end{align}
By \eqref{midmid1} and \eqref{midmid2}, we finally get
\begin{equation} \label{alternate rhs T1.10}
\left(\sum_{n=0}^{\infty}S_{r+1}(n)q^{n}\right)
\left(\sum_{n=0}^{\infty}W_{r+1}(n)q^{n}\right)=
\frac{(q^{r+1};q^{r+1})_{\infty}}{(q;q)_{\infty}}
+\frac{1}{(q;q)_{\infty}}\sum_{n=1}^{\infty}\frac{q^n(q^{r+1};q^{r+1})_{n}}{1-q^n},
\end{equation}
which is the right hand side of Theorem \ref{maxr1}.

Now we are ready to give the bijective proof. The coefficient of $q^{n}$ on the left hand side of Theorem \ref{maxr1} is the number of ordered pairs $(\lambda, i)$ such that
\begin{equation} \label{maex lambda, i}
\lambda \in \mathcal{P}(n), \quad \text{and} \quad 1\leq i\leq \ell(\lambda) -maex(\lambda; r)+\Omega_r(\lambda).
\end{equation}
Coming to the right hand side of Theorem \ref{maxr1} and taking into account its alternate form in \eqref{alternate rhs T1.10}, i.e., $\left(\sum_{n=0}^{\infty}S_{r+1}(n)q^{n}\right)
\left(\sum_{n=0}^{\infty}W_{r+1}(n)q^{n}\right)$, we see that the coefficient of $q^n$ here is the number of ordered pairs of partitions $(\alpha,\beta)$ such that
\begin{align}
& |\alpha| + |\beta| = n, \quad \text{$\alpha$ is an $(r+1)$ - strict partition}, \nonumber \\ 
& \text{$\nu_{\beta}(t)$ is divisible by $r+1$ for $t > s(\beta)$}. \label{maex alpha, beta}
\end{align}
So we aim to construct a bijection between the pairs of partitions in \eqref{maex lambda, i} and \eqref{maex alpha, beta}. This time, we will need an operator similar to $\Psi$.
%
%

We define an operator $\Phi$ on pairs $(\alpha, \beta)$ of partitions. Firstly, write $\beta$ in a more compact notation:
$$\beta= \left(\beta_{1}^{\nu(\beta_1)},\beta_{2}^{\nu(\beta_2)},\ldots,\beta_{k}^{\nu(\beta_k)}\right),$$
with $\beta_1>\beta_2>\cdots>\beta_{k}$. Then for each $i$ we can find unique nonnegative integers $q_i,h_i$ such that
$$\nu(\beta_i)=q_i(r+1)+h_i,$$
where $r$ is a fixed positive integer and $0 \leq h_i\leq r$. Now we keep all the copies of $\beta_k$ in $\beta$, and for each $1\leq i\leq k-1$, we move $h_i$ copies of $\beta_i$ from $\beta$ to $\alpha$. For the sake of convenience let the resulting versions of $\alpha$ and $\beta$, by abuse of notation, be represented by $\Phi(\alpha)$ and $\Phi(\beta)$ respectively. Then we define $\Phi(\alpha,\beta) :=(\Phi(\alpha),\Phi(\beta))$.
\begin{proof}[Bijective proof of Theorem \ref{maxr1}][]
For any $(\lambda,i)$ with $\lambda \in \mathcal{P}(n)$ and $1\leq i\leq \ell(\lambda) -maex(\lambda; r)+\Omega_r(\lambda)$, define
$$\Delta(\lambda,i)=\Phi(\lambda'\downarrow^{(\ell(\lambda)+2-i)},\lambda'\uparrow^{(\ell(\lambda)+2-i)}).$$
We show that this gives a bijective proof for Theorem \ref{maxr1}.

Firstly, we need to show that $\Delta$ is well defined. For this, all we need to check is $1\leq \ell(\lambda)+2-i\leq \ell(\lambda)+1$, since otherwise, the horizontal cut cannot be performed on $\lambda'$. The second inequality $\ell(\lambda)+2-i\leq \ell(\lambda)+1$ readily follows as $i \geq 1$. Now if $\lambda\in\mathcal{P}_{r}^{0}(n)$, then $\Omega_r(\lambda)=1$ and $maex(\lambda; r) = 0$, so that $1\leq i\leq \ell(\lambda)+1$. This gives us $\ell(\lambda)+2 \geq i + 1$ or $\ell(\lambda)+2 - i \geq 1$. On the other hand, if $\lambda\in \mathcal{P}_{r}^{+}(n)$, then we have $\Omega_r(\lambda)=r$ and $r\leq maex(\lambda; r)<\ell(\lambda)$, so that
$$\ell(\lambda)+2-i\geq \ell(\lambda)+2- \ell(\lambda)+maex(\lambda; r)-\Omega_r(\lambda) = 2 + maex(\lambda; r) - r \geq 2.$$
So in both the cases, $\Delta$ is well defined.

Next, we need to show that $\Phi(\lambda'\downarrow^{(\ell(\lambda)+2-i)},\lambda'\uparrow^{(\ell(\lambda)+2-i)})$ is of the type described in \eqref{maex alpha, beta}. By our choice of $i$ and the definition of $maex(\lambda; r)$, one can see that $\lambda'\downarrow^{(\ell(\lambda)+2-i)}$ is always an $(r+1)$ - strict partition. So $\Phi(\lambda'\downarrow^{(\ell(\lambda)+2-i)})$ is an $(r+1)$ - strict partition as well. And from the definition of the operator $\Phi$, it is clear that for $\Phi(\lambda'\uparrow^{(\ell(\lambda)+2-i)})$, only its smallest part may have multiplicity not divisible by $r+1$. So $\Phi(\lambda'\downarrow^{(\ell(\lambda)+2-i)},\lambda'\uparrow^{(\ell(\lambda)+2-i)})$ satisfies all the conditions that we are looking for.

Finally, we need to show that $\Delta$ is invertible. Given any pair $(\alpha,\beta)$, we can first recover $\lambda$ in the same manner as that of the inverse of $\Gamma$. And now one can see that $i$ is equal to $1$ plus the number of parts in $\alpha$ which are less than or equal to the smallest part of $\beta$. Here we take the smallest part of $\beta$ to be infinity when $\beta$ is the empty partition. This concludes the proof.
\end{proof}

\section{Acknowledgements}
The first author expresses his thanks towards his institution Pt. CLS Govt. College, Sector-14, Karnal, Haryana under the aegis of Directorate of Higher Education Haryana (India) for their immense support and for providing a conducive research environment. The second author wishes to thank his current employers BITS Pilani and former employers IIT Indore for the wonderful research facilities. The third author is thankful to Prof. Ae Ja Yee for fruitful discussions and for providing helpful suggestions. In addition, the authors of this work are grateful to Dr. Damanvir Singh Binner for initial discussions thereby providing impetus to this work.

\end{document}